\DeclareFontFamily{OT1}{rsfs}{}
\DeclareFontShape{OT1}{rsfs}{n}{it}{<-> rsfs10}{}
\DeclareMathAlphabet{\mathscr}{OT1}{rsfs}{n}{it}
\theoremstyle{plain}
\newtheorem{thm}{Theorem}[section]
\newtheorem*{thm*}{Theorem}
\newtheorem*{cor*}{Corollary}
\newtheorem*{defn*}{Definition}
\newtheorem{prop}[thm]{Proposition}
\newtheorem{{expFact}}[thm]{Experimental Fact}
\newtheorem{cor}[thm]{Corollary}
\newtheorem*{claim*}{Claim}
\theoremstyle{definition}
\newtheorem{ex}[thm]{Example}
\newtheorem{rem}[thm]{{\it Remark}}
\theoremstyle{remark}
\numberwithin{equation}{thm}
\newcommand{\PP}{{\Bbb P}}
\newcommand{\GG}{{\Bbb G}}
\newcommand{\CC}{{\Bbb C}}
\newcommand{\QQ}{{\Bbb Q}}
\newcommand{\ZZ}{{\Bbb Z}}
\newcommand{\FF}{{\Bbb F}}
\newcommand{\sO}{{\mathcal O}}
\newcommand{\sI}{{\mathcal I}}
\newcommand{\sF}{{\mathcal F}}
\newcommand{\sE}{{\mathcal E}}
\newcommand{\sS}{{\mathcal S}}
\newcommand{\sEC}{{\mathcal {EC}}}
\newcommand{\sR}{{\mathcal R}}
\newcommand{\sC}{{\mathcal C}}
\newcommand{\tensor}{{\otimes}}
\newcommand{\calE}{\mathcal{E}}
\newcommand{\fkM}{\mathfrak{M}}
\newcommand{\fkP}{\mathfrak{P}}
\DeclareMathOperator{\rank}{{rank}}
\DeclareMathOperator{\Pic}{{Pic}}
\DeclareMathOperator{\coker}{{coker}}
\DeclareMathOperator{\Ext}{{Ext}}
\DeclareMathOperator{\PGL}{{PGL}}
\DeclareMathOperator{\Sec}{{Sec}}
\DeclareMathOperator{\Spec}{{Spec}}
\DeclareMathOperator{\SL}{{SL}}
\begin{document}

\title[]{Extensions and paracanonical curves of genus $6$}

\author[F. -O. Schreyer ]{Frank-Olaf Schreyer}
\address{	Mathematik und Informatik, Universit\"{a}t des Saarlandes, Campus E2 4,
	D-66123 Saarbr\"{u}cken, Germany}
\email{schreyer@math.uni-sb.de}

\author[H.L. Truong]{Hoang Le Truong}
\address{Institute of Mathematics, VAST, 18 Hoang Quoc Viet Road, 10307
Hanoi, Viet Nam}
\email{hltruong@math.ac.vn\\
	truonghoangle@gmail.com}

\thanks{2020 {\em Mathematics Subject Classification\/}: 
14C05 (primary); 14D07, 14D20, 14J26, 14J28, 14E08 (secondary).\\
}
\keywords{ 
}

\begin{abstract} 

In this note we give a computationally easy to use method to compute a maximal extension of certain varieties. As a application we prove
that a general paracanonical curve $C$ genus $6$ as a codimension three subvarieties of  $\PP^4$ extend to precisely
  $26$ families of surfaces $Y \subset \PP^{5}$.

Moreover we give partial classification of maximally extended codimension three ACM subvarieties of degree $10$.

\end{abstract}

\maketitle


\section{Introduction}

This note grew out of the attempt to answer  Question 4.7 of \cite{AHK23} in a special case: Can we classify codimension $3$
subvarieties $Z$ of a projective spaces with Betti table
$$
\begin{matrix}\label{betTab}
          & 0 & 1 & 2 & 3\\ \hline
       0: & 1 & . & . & .\\
       1: & . & . & . & .\\
       2: & . & 10 & 15 & 6
       \end{matrix} \leqno{(1)}\label{betTab}
       $$
An examples of such a variety is defined by the $3\times 3$ of a generic $3\times 5$ matrix of linear forms, i.e., $Z$ is the secant variety 
of $\PP^{2}\times \PP^{4} \subset \PP^{14}$.
Not all are of this type. The curve section $C=Z \cap\PP^{4}\subset \PP^{14}$ for a general linear subspace $\PP^{4} \subset \PP^{14}$ 
is isomorphic to a smooth plane quintic, while a general paracanonical curve $C  \subset \PP^{4}$ of genus $6$ has the Betti table (\ref{betTab}) above
but is not isomorphic to a smooth plane quintic.

Our approach to this classification problem is to compute maximal extensions of paracanonical curves
using the deformation theory of the second and third matrix in the free resolution of coordinate ring of $C$.

We believe that the computationally simple approach of Theorem \ref{extensions} to extending varieties is of independent interest.

Our main result Theorem \ref{thm1} says the following: \medskip

{\it A general paracanonical curve $C \subset \PP^{4}$ of genus $6$ can be extended to precisely $26$ different families of surfaces $Y \subset \PP^{5}$,
while a general Prym canonical curve  $C \subset \PP^{4}$ can be extended to precisely $27$ different families of surfaces, where the extra family 
is the unique Fano polarized Enriques surface $Y \subset \PP^{5}$ which contains $C$ as a hyperplane section.} \medskip

In case of curves isomorphic to a general smooth plane quintic we have instead Theorem \ref{thm2}: \medskip

{\it A general paracanonical curves which is isomorphic to a smooth plane quintic has a single family of $1$-extension $Y$ 
all of which are determinantal surfaces. A general
Prym canonical curve $C \subset \PP^{4}$ which isomorphic to a smooth plane quinic has two families of $1$-extensions 
where the second family is the unique Enriques surface $Y$, which contains $C$ as a hyperplane section.} \medskip
 
Smooth surfaces $Y\subset \PP^{5}$ of degree $10$ with the Betti table as above have been classified in \cite{Truong}.
The families are characterized by the self-intersection number $K_{Y}^{2}$ which can take values in $\{-6, \ldots, 0 \}$, 
see also Table (\ref{table1}). We have observed that for a general member in each family the obstruction to extend a first order 
extension to a full extension, i.e.  the ideal $eq$ of Theorem \ref{extensions}, vanishes.  
However we do not have an explanation of this phanomen. Presumably, a theoretical proof would establish this for all $3$-regular (smooth?) 
surfaces $Y\subset \PP^{5}$ of degree $10$.

The paper is organized as follows. Section \ref{extend} contains our main result Theorem \ref{extensions} on the computation of extensions.
Section \ref{paraCurves} contains the proof of our two main Theorems.

In Section \ref{maxExtension} we comment on geometry of the maximal extensions of the general surfaces 
in this classification.  
In Section \ref{familiesOfPairs} we summarise the consequences  for the moduli spaces of pairs $(Y,C)$, and in Section \ref{specialCurves}
we report about our findings for special $C$.\medskip

We are mainly interested in case of curves and surfaces over the ground field $\CC$. However 
our computations documented in our Macaulay2 \cite{M2} package
SurfacesAndExtensions
\cite{STm2} are usually over a finite prime
field $\FF_{p}$, which we regard as the reduction mod $p$ of a computation of an example defined over an open part of $\Spec \ZZ$.
Semi-continuity will imply that the results are true for an example defined over $\QQ$ and hence over $\CC$. We leave the very interesting case 
of very small characteristics aside. \medskip

{\bf Acknowledgement.} We thank Simon Brandhorst, Thomas Dedieu, Igor Dolgachev, Claus Fieker and Alessandro Verra for valuable discussions.
This paper is a contribution to the  Project-ID 286237555 - TRR 195 of the Deutsche Forschungsgemein-
schaft (DFG, German Research Foundation) of the first author. H.L.Truong was partially supported by Tosio Kato Fellowship and the Vietnam Academy of Science and Technology (VAST) under grant number  CTTH00.03/24-25.

\section{Extensions}\label{extend}

Let $X \subset \PP^{n}$ be a variety. An extension $Y \subset \PP^{n+e}$ of $X$ is a variety such that $X=\PP^{n}\cap Y$
where $\PP^{n} \subset \PP^{n+e}$ is the span of the first $n+1$ coordinate points of $\PP^{n+e}$, such that the defining linear forms
$x_{n+1},\ldots,x_{n+e}$ of $\PP^{n} \subset \PP^{n+e}$ form a regular sequence on the homogeneous coordinate ring $S_{Y}$ of $Y$ and 
$S_{X}=S_{Y}/(x_{n+1},\ldots,x_{n+e})S_{Y}$. 

Thus if $X$ is an arithmetically Cohen-Macaulay (ACM) variety, then $Y$ is ACM as well.
If $X$ and $Y$ are not cones  then we speak of a non-trivial extension.

\begin{thm}\label{extensions} Let $X \subset \PP^{n}$ be a variety of codimension $c$ which is not a cone, whose minimal free resolution $F$
$$
\xymatrix{
0 & \ar[l] S_{X}  &\ar[l] F_{0} & \ar[l] \ldots & \ar[l] F_{i-1} & \ar[l]_{\varphi_{i}} F_{i} &\ar[l]_{\varphi_{i+1}} F_{i+1} & \ar[l] \ldots &\ar[l] F_{pd}& \ar[l] 0 \\
}
$$
has two consecutive differentials $\varphi_{i}, \varphi_{i+1}$ with $i+1\le c$ given by linear matrices. Consider
the linear equations obtained from
$$\tilde A\cdot \varphi_{i+1}+\varphi_{i}\cdot \tilde B=0$$
for the entries  of the $\rank F_{i-1}\times \rank F_{i}$ matrix $\tilde A=(a_{ij})$ and  $\rank F_{i}\times \rank F_{i+1}$ matrix $\tilde B=(b_{jk})$
by comparing coefficients. 
Suppose 
$$\PP^{m}\subset \PP^N \hbox{ with } N=\rank F_{i-1}\cdot\rank F_{i}+\rank F_{i}\cdot \rank F_{i+1}-1$$
is the linear solution space. Let
$A$ and $B$ be the linear matrices parametrized by $\PP^{m}$. Then maximal non-trivial linear extension of $X$ are in bijection with maximal linear 
subspaces of the vanishing loci $V(eq) \subset \PP^{m}$ where $eq$ denote the ideal generated by the 
entries of the $\rank F_{i-1}\times \rank F_{i+1}$ matrix $AB$. 
\end{thm}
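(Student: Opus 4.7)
The plan is to match an $e$-extension $Y\subset \PP^{n+e}$ of $X$ with an $(e-1)$-dimensional projective linear subspace of $V(eq)\subset \PP^m$, obtained from lifts of the two linear differentials $\varphi_i,\varphi_{i+1}$ to the enlarged polynomial ring $S'=k[x_0,\ldots,x_{n+e}]$, where $S=k[x_0,\ldots,x_n]$ is the coordinate ring of $\PP^n$. Because $x_{n+1},\ldots,x_{n+e}$ is a regular sequence on $S_Y$ with quotient $S_X$, the Betti numbers of $S_Y$ over $S'$ coincide with those of $S_X$ over $S$, and the minimal free resolution $F'$ of $S_Y$ reduces modulo the new variables to $F$. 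Minimality forces the lifts $\tilde\varphi_i,\tilde\varphi_{i+1}$ to remain linear, so I would write
$$\tilde\varphi_i=\varphi_i+\sum_{j=1}^e x_{n+j}A^{(j)},\qquad \tilde\varphi_{i+1}=\varphi_{i+1}+\sum_{j=1}^e x_{n+j}B^{(j)}$$
for constant matrices $A^{(j)}, B^{(j)}$ of the correct shape.

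Comparing coefficients in the identity $\tilde\varphi_i\tilde\varphi_{i+1}=0$, the coefficient of $x_{n+j}$ yields $A^{(j)}\varphi_{i+1}+\varphi_i B^{(j)}=0$, placing each $[A^{(j)},B^{(j)}]$ in the linear solution space $\PP^m\subset \PP^N$. The coefficient of $x_{n+j}x_{n+k}$ yields $A^{(j)}B^{(k)}+A^{(k)}B^{(j)}=0$, which together with the diagonal relations $A^{(j)}B^{(j)}=0$ is equivalent to the assertion $\bigl(\sum_j\lambda_j A^{(j)}\bigr)\bigl(\sum_j\lambda_j B^{(j)}\bigr)=0$ for every $\lambda$. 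This is exactly the condition that the projective span of the $[A^{(j)},B^{(j)}]$ is contained in $V(eq)$. Non-triviality of $Y$ translates into projective independence of these points, since the all-zero lift corresponds to the cone $\widehat{X}\ast\PP^{e-1}$. Maximality on the two sides corresponds under the resulting bijection, settling the easy direction.

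For the converse I would pick a basis $[A^{(j)},B^{(j)}]$ of a given $(e-1)$-dimensional linear subspace of $V(eq)\subset \PP^m$ and define $\tilde\varphi_i,\tilde\varphi_{i+1}$ as above; by construction $\tilde\varphi_i\tilde\varphi_{i+1}=0$ holds in $S'$. The remaining task is to extend this two-term segment to a full minimal resolution $F'$ of a cyclic graded module $S'/I_Y$ on which $x_{n+1},\ldots,x_{n+e}$ is a regular sequence with quotient $S_X$. The approach would be a one-parameter deformation: consider $\tilde\varphi_\bullet^{(t)}=\varphi_\bullet+t\sum x_{n+j}(\cdot)$, so that at $t=0$ one recovers the trivial cone extension with full resolution $F\otimes_S S'$, and lift the remaining differentials $\varphi_\ell$ ($\ell\ne i,i+1$) step by step over $S'$. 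The hypothesis $i+1\le c$ together with the Cohen--Macaulay property of $X$ is meant to keep the depths of the ideals of maximal minors under control, so that the Buchsbaum--Eisenbud acyclicity criterion applies to $F'$ and semi-continuity of Betti numbers forces $F'$ to be a minimal free resolution of a cyclic module of the expected Betti type.

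The main obstacle is precisely this converse: a priori there could be higher-order obstructions, beyond the quadratic ones cut out by $eq$, to integrating the pair $(A,B)$ into a genuine extension, and the authors themselves flag this as a mysterious point in their applications. Ruling out such obstructions in general is delicate; the deformation and depth-theoretic argument above sketches the route I would take, but verifying that no further obstructions intervene is the crux and may require cohomological input specific to the ACM setting beyond what is assumed in the bare statement.
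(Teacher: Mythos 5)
Your forward direction is essentially the paper's: you split $\tilde\varphi_i\tilde\varphi_{i+1}=0$ into its bidegree pieces in the new variables, exactly as the paper splits $\Phi_i\Phi_{i+1}=0$ with $A'=\Phi_i(0,y)$, $B'=\Phi_{i+1}(0,y)$, and your reduction-mod-$(y_1,\ldots,y_e)$ and linearity arguments are the intended ones. The genuine gap is the converse, which you explicitly leave open; but this is not an open or mysterious point --- the paper proves it, and what you are missing is one specific deformation-theoretic input, not extra cohomological hypotheses. Given a linear subspace of $V(eq)$ through the trivial solution, write the restricted matrices as $A''+A'$ and $B''+B'$ with $A''=\varphi_i$, $B''=\varphi_{i+1}$. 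The three-term complex $G_{i-1}\xleftarrow{A''+A'}G_i\xleftarrow{B''+B'}G_{i+1}$ is the presentation of a deformation of $\coker(\varphi_i)$ over $\mathbb{A}^e$ in which the relations lift. By Artin's theorem on deformations of modules (the reference \cite{Artin} of the paper), lifting the relations already implies flatness, and then \emph{all} higher syzygies lift: this is precisely the statement that there are no obstructions beyond the quadratic ones recorded in $eq$, i.e.\ the point your sketch cannot reach. This extends the deformed complex rightwards to $G_c$. To extend leftwards, the paper dualizes: $G_{i-1}^{*}\to G_i^{*}\to G_{i+1}^{*}$ is a deformation of $\coker(\varphi_{i+1}^{t})$ whose relations lift, and since $\Ext^{\ell}_{S}(S_X,S)=0$ for $\ell<c$ (pure grade, valid for any codimension-$c$ subvariety; this is exactly where $i+1\le c$ enters, and no ACM hypothesis or depth control of ideals of minors is needed), the dual complex $0\to F_0^{*}\to\cdots\to F_i^{*}\to F_{i+1}^{*}$ is exact, hence a free resolution of $\coker(\varphi_{i+1}^{t})$; Artin's theorem applies once more and lifts this whole resolution. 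Dualizing back, $S_Y$ is the cokernel of the dual of the deformed map $G_0^{*}\to G_1^{*}$, and it is an extension of $S_X$ by construction.

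Two smaller corrections. First, your plan of ``step-by-step lifting plus Buchsbaum--Eisenbud plus semicontinuity'' would at best verify acyclicity of a complex you cannot yet construct; the construction itself is what Artin's flatness criterion provides, and without it the higher-order obstructions you worry about are indeed not ruled out. Second, you misattribute the authors' puzzlement: what they flag as unexplained is the empirical observation that for their degree-$10$ surfaces in $\PP^{5}$ the obstruction ideal $eq$ vanishes identically (so every first-order extension integrates), not any unproven step in Theorem \ref{extensions} itself, whose converse is fully established by the argument above.
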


\begin{proof} Suppose $X \subset Y \subset \PP^{n+e}$ is an extension. We use coordinates 
$x_{0},\ldots,x_{n}, y_{1},\ldots,y_{e}$ on $\PP^{n+e}$ such that $V(y_{1},\ldots,y_{e})=\PP^{n}$.
The minimal free resolution $F$ is obtained from the minimal free resolution $G$ by restriction:
$F=G/(y_{1},\ldots,y_{e})G$ as an $K[x_{0},\ldots,x_{n}]=K[x_{0},\ldots,x_{n},y_{1},\ldots,y_{e}]/(y_{1},\dots,y_{e})$-module, 
since
 $y_{1},\ldots, y_{e}$ a regular sequence on the homogeneous coordinate ring of $S_{Y}$ by the definition of an extension.
Writing $\Phi_{i}(x,y)$ and $\Phi_{i+1}(x,y)$ for the $i$-th and $(i+1)$-st differential of $G$ we have
$\varphi_{i}=\Phi_{i}(x,0)$ and $\varphi_{i+1}=\Phi_{i+1}(x,0)$.
Since $\Phi_{i}\cdot \Phi_{i+1}=0$ we see that $A'=\Phi_{i}(0,y)$ and $B'=\Phi_{i+1}(0,y)$ satisfy of the system of equations
$$A'\cdot \varphi_{i+1}+\varphi_{i}\cdot B'=0 \hbox{ and } A'\cdot B'=0.$$
Actually  $A=\Phi_{i}$ and $B=\Phi_{i+1}$ is a solution as well.

Conversely, given a maximal linear subspace $\PP^{n+e}$ of $V(eq) \subset \PP^{m} \subset \PP^{N}$. Then $\PP^{n}$ can be identified with 
a subspace $\PP^{n+e}$ because the trivial solution $A''=\varphi_{i}$ and $B''=\varphi_{i+1}$ corresponds to a subspace. Write
$$A|_{\PP^{n+e}}=A''+A' \hbox{ and } B|_{\PP^{n+e}}=B''+B'.$$
Consider the three term complex 
$$
\xymatrix{ G_{i-1}& \ar[l]_{A''+A'} G_{i} & \ar[l]_{B''+B'} G_{i+1}.\\}
$$
It can be viewed as the presentation a deformation of the module $\coker(\varphi_{i})$ with base $\mathbb A^{e}$ whose syzygies lift. 
By \cite{Artin} the higher syzygies lift as well which give us a complex
$$
\xymatrix{ G_{i-1}& \ar[l]_{A''+A'} G_{i} & \ar[l]_{B''+B'} G_{i+1}& \ar[l] G_{i+2}
& \ar[l] \ldots & \ar[l] G_{c}& \ar[l] 0.}
$$
Like wise 
$$
\xymatrix{ G_{i-1}^{*} \ar[r]^{(A''+A')^{t}} & G_{i}^{*}  \ar[r]^{(B''+B')^{t}} & G_{i+1}^{*}\\}
$$
can be viewed as a deformation of $\coker(\varphi_{i+1}^{t})$ with a lifting of sygygies. Thus this maps
extends to a deformation of the complex
$$
\xymatrix{
0\ar[r] & F_{0}^{*} \ar[r]& \ldots  \ar[r] &F_{i-1}^{*}  \ar[r]^{\varphi_{i}^{t}} & F_{i}^{*} \ar[r]^{\varphi_{i+1}^{t}}& F_{i+1}^{*} 
}
$$
which is exact since $\Ext^{\ell}_{S}(S_{X},S)=0$ for $\ell <c$.
We obtain the homogeneous coordinate ring $Y$ as the cokernel of the dual of the map $G_{0}^{*} \to G_{1}^{*}$. 
\end{proof}

\begin{ex}[Pinkham's example \cite{Pinkham}] Let $C \subset \PP^{4}$ denote the rational normal curve of degree $4$. 
The ideal $I_{C} \subset S=K[x_{0},\ldots,x_{4}]$ of
$C$ is defined by the $2\times 2$-minors of the matrix 
$\begin{pmatrix}
x_{0} & x_{1} & x_{2} & x_{3} \cr
x_{1} & x_{2} & x_{3} & x_{4} 
\end{pmatrix}$
and  its coordinate ring $S_{C} =S/I_{C}$ is resolved by the Eagon-Northcott complex
$$
\xymatrix{ 0 & \ar[l] S_{C} &  \ar[l] S &  \ar[l] S^{6}(-2) & \ar[l]_{\varphi_{2}} S^{8}(-3) & \ar[l]_{\varphi_{3}} S^{3}(-4) & \ar[l] 0. \\}
$$
In this case the linear matrices $A$ and $B$ depend on $9$ parameters and the ideal $eq$ decomposes
 $$eq=\left(b_{5,2}b_{6,1},\,b_{4,1}b_{5,2}-b_{5,2}b_{6,2},\,b_{1,1}b_{5,2}-
       b_{4,2}b_{5,2}\right)=\left(b_{5,2}\right)\cap \left(b_{6,1},\,b_{4,1}-b_{6,2},\,b_{1,1}-b_{4,2}\right)$$
into two linear components. The first correspond to the extension to $\PP^{1}\times \PP^{3} \subset \PP^{7}$
the second to the Veronese surface $V \subset \PP^{5}$.
\end{ex}

\begin{ex}[Tom and Jerry, \cite{Reid}] Consider the del Pezzo surface $X$ of degree $6$ in $\PP^{6}$. It has syzygies
$$
\xymatrix{ 0  & \ar[l] S_{X} &  \ar[l] S &  \ar[l] S^{9}(-2) & \ar[l]_{\varphi_{2}} S^{16}(-3) & \ar[l]_{\varphi_{3}} S^{9}(-4) &   \ar[l] S(-6) &\ar[l] 0.}
$$
The $9\times 16$ and $16\times 9$ matrices $A$ and $B$ depend on $10$ parameters and the ideal $eq$ decomposes into
$$eq=\left(b_{13,7}b_{15,8}+b_{14,8}b_{15,8}-b_{15,8}^{2},\,b_{11,7}b_{13,7}+b_{11,7}b_{14,8}-b_{11,7}b_{15,8}\right)= \left(b_{13,7}+b_{14,8}-b_{15,8}\right) \cap \left(b_{15,8},\,b_{11,7}\right) 
$$
which correspond to the extensions to $\PP^{2}\times \PP^{2} \subset \PP^{8}$ and $\PP^{1}\times \PP^{1} \times \PP^{1} \subset \PP^{7}$
respectively.
\end{ex}
Both example are computed with our Macaulay2 package 
SurfacesAndExtensions \cite{STm2}.

\section{Extensions of  general paracanonical curves of genus $6$}\label{paraCurves}

Let $C$ be a smooth projective curve of genus $6$. For a non-trivial line bundle $\eta \in \Pic^0(C)$, we shall study the paracanonical line bundle $L := K_C \otimes \eta$. When $\eta \in \Pic^{0}(C)[2]$ is a non-trivial point, then $L$ is a Prym-canonical line bundle. For each paracanonical bundle $L$, we have $h^0(C,L) = 5$ and an embedding
$$ \Phi_L: C \hookrightarrow \Bbb P^4$$
unless $\eta \in C^{2}- C^{2}$ lies in the second difference variety of $C$. In the following we assume that $\eta \notin  C^{2}- C^{2}$
and call $C=\Phi_L(C)\subset \PP^{4}$ a paracanonical embedded curve.

Since $h^{0}(\PP^{4},\sO(2))=15=h^{0}(C,L^{\otimes 2})$ 
pairs $(C,\eta)$ such that the coordinate ring of $\Phi_{L}(C)$ is not arithmetically Cohen-Macaulay form a divisor in the moduli space $\fkP ic^{0} _{6}\to \fkM _{6}$, see \cite{Farkas12}.
In the following we consider only pairs $(C,\eta)$ outside this divisor, i.e., with $H^{1}(\PP^{4},\sI_{C}(2))=0$.

\begin{prop} Let $C \subset \PP^{4}$ be a paracanonical embedded curve of genus $6$ with $H^{1}(\PP^{4},\sI_{C}(2))=0$. Then $C$ has Betti table
$$\begin{matrix}
          & 0 & 1 & 2 & 3\\ \hline
       0: & 1 & . & . & .\\
       1: & . & . & . & .\\
       2: & . & 10 & 15 & 6 
       \end{matrix}$$
\end{prop}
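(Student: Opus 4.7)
The plan is to show that the ideal sheaf $\sI_C$ is $3$-regular on $\PP^4$, which together with the hypothesis will force the minimal free resolution of $S_C$ to be pure of type $(0,3,4,5)$; the three remaining Betti numbers can then be pinned down either by the Herzog--K\"uhl formula or by matching Hilbert functions at a few values.

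First I would record the cohomology of the paracanonical bundle $L=K_C\otimes\eta$ via Riemann--Roch, using $\deg L = 2g-2 = 10$ and $\eta \ne \sO_C$. This gives $h^0(L)=5$, $h^1(L)=h^0(\eta^{-1})=0$, and for $n\ge 2$ the values $h^0(L^n)=10n-5$ with $h^1(L^n)=0$ (since $K_C\otimes L^{-n}$ then has negative degree). In particular $h^0(\sO_{\PP^4}(2))=15=h^0(L^2)$, so the hypothesis $H^1(\PP^4,\sI_C(2))=0$ already forces $h^0(\sI_C(2))=0$; no quadric vanishes on $C$.

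Second, I would verify $3$-regularity of $\sI_C$ directly from the Castelnuovo--Mumford definition. The key vanishing $H^1(\sI_C(2))=0$ is hypothesized; the remaining ones fall out from Step 1 and dimension reasons, namely $H^2(\sI_C(1))=H^1(C,L)=0$, $H^3(\sI_C(0))=H^2(C,\sO_C)=0$, and $H^4(\sI_C(-1))=H^3(C,\sO_C(-1))=0$. Hence $H^1(\sI_C(n))=0$ for all $n\ge 2$; combined with linear normality of the embedding (giving $H^1(\sI_C(1))=0$) and the trivial vanishings for $n\le 0$, the curve $C$ is arithmetically Cohen--Macaulay, so the resolution of $S_C$ has length exactly $3$.

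Finally, $3$-regularity of $\sI_C$ is equivalent to $2$-regularity of $S_C$, so every nonzero Betti number $\beta_{i,j}$ of $S_C$ with $i\ge 1$ satisfies $j\le i+2$. Together with $\beta_{1,2}=h^0(\sI_C(2))=0$ and the minimality of the resolution (no cancellations in positions $(2,3)$ or $(3,4)$), only $\beta_{0,0}$, $\beta_{1,3}$, $\beta_{2,4}$ and $\beta_{3,5}$ can be nonzero, i.e., the resolution is pure of type $(0,3,4,5)$. To conclude, I would either invoke the Herzog--K\"uhl formula, which for a pure resolution of this type in codimension $3$ forces the proportions $1:10:15:6$, or equivalently I would solve for $(\beta_{1,3},\beta_{2,4},\beta_{3,5})$ directly by evaluating
$$h_{S_C}(n)\;=\;\binom{n+4}{4}-\beta_{1,3}\binom{n+1}{4}+\beta_{2,4}\binom{n}{4}-\beta_{3,5}\binom{n-1}{4}$$
against $h_{S_C}(n)=h^0(C,L^n)$ at $n=3,4,5$, obtaining $10,15,6$ successively.

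The only step requiring genuine care is the verification of $3$-regularity, and the essential content there is the checking that the Riemann--Roch vanishings on $C$ (which use $\eta\ne\sO_C$ in a crucial way) together with the single hypothesis $H^1(\sI_C(2))=0$ are enough to kill every Castelnuovo--Mumford obstruction; the rest is bookkeeping about the shift between the regularities of $\sI_C$ and $S_C$.
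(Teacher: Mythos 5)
Your proof is correct, but it takes a genuinely different route from the paper. The paper establishes that $C$ is arithmetically Cohen--Macaulay via Mumford's base-point-free pencil trick: choosing a pencil $V\subset H^0(C,L)$, a commutative diagram shows that surjectivity of $S_2H^0(C,L)\to H^0(C,L^{\otimes 2})$ (equivalent to the hypothesis $H^1(\sI_C(2))=0$, by the dimension count $15=15$) propagates inductively to surjectivity in all degrees $n\ge 3$; the Betti table is then read off from the Hilbert series $H_{S_C}(t)=(1-10t^3+15t^4-6t^5)/(1-t)^5$, with the purity of the resolution left implicit. You instead verify $3$-regularity of $\sI_C$ directly from the Castelnuovo--Mumford criterion --- the hypothesized $H^1(\sI_C(2))=0$ together with $H^2(\sI_C(1))\cong H^1(C,L)=H^0(C,\eta^{-1})^*=0$ (this is where $\eta\ne\sO_C$ enters, just as it does implicitly in the paper's Riemann--Roch count) and the trivial vanishings in lower twists --- and let the regularity machinery do the induction, then make the purity argument explicit ($2$-regularity of $S_C$ plus minimality kills everything except $\beta_{0,0},\beta_{1,3},\beta_{2,4},\beta_{3,5}$) before pinning down the values by Herzog--K\"uhl or by evaluating the Hilbert function at $n=3,4,5$. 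Both arguments rest on the same two inputs (quadratic normality and the vanishing $h^0(\eta^{\pm 1})=0$); the paper's pencil-trick argument is more elementary and self-contained, while yours outsources the induction to the regularity theorem and in exchange makes the final bookkeeping (why the resolution must be pure, rather than merely having the right alternating sums) fully transparent, which is a point the paper's one-line Hilbert-series conclusion glosses over.
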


\begin{proof} Let $V \subset H^{0}(C,L)$ be a two dimensional subspace corresponding to a base point free pencil.
Following \cite{Mumford}  we obtain a commutative diagram
$$
\xymatrix{
 && V\tensor S_{n}H^{0}(C,L)\ar[d] \ar[r]& S_{n+1} H^{0}(C,L) \ar[d] &\cr
 0 \ar[r] &H^{0}(C,L^{\tensor (n-1)}) \ar[r]& V\tensor H^{0}(C,L^{\tensor n}) \ar[r]& H^{0}(C,L^{\tensor(n+1)})\ar[r] &0\cr
}
$$
with the right map in bottom row surjective for $n\ge 2$ since $(n+1)d+1-g=2(nd+1-g)-((n-1)d+1-g)$. 
Thus the surjectivity $S_{n}H^{0}(C,L) \to H^{0}(C,L^{\tensor n})$ for $n=2$ implies the surjectivity for all $n\ge 3$ by induction and the commutativity
of the diagrams. Hence
$C \subset \PP^{4}$ is  arithmetically Cohen-Macaulay with artinian Hilbert function $(1,3,6,0,\ldots)$. The result follows because
the Hilbert series of the homogeneous coordinate ring $S_{C}$ of $C \subset \PP^{4}$ satisfies
$$H_{S_{C}}(t)= \frac{1-10t^{3}+15t^{4}-6t^{5}}{(1-t)^{5}}=\frac{1+3t+6t^{2}}{(1-t)^{2}}.$$
\end{proof}

 \begin{thm}\label{thm1}
Let $C \subset \PP^{4}$ be a general  smooth paracanonical curve  of genus $6$. Then 
\begin{enumerate}[$1)$]
\item there is a unique family of  $1$-extensions $Y$ of $C$ with $K_Y^2=-5$,
\item there are $5$ families of  $1$-extensions $Y$ of $C$ with $K_Y^2=-4$,
\item\label{3)} there are $20$ $1$-extensions $Y$ of $C$ with $K_Y^2=-3$,
\end{enumerate}
and no other.
If $C$ is a general Prym canonically embedded curve then there is apart from  26 families from above one more surface,
\begin{enumerate}[$4)$]
\item there is a unique $1$-extension $Y$ of $C$ which is a Fano polarized Enriques $Y$, hence $K_{Y}^{2}=0$.
\end{enumerate}
\end{thm}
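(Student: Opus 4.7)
The plan is to apply Theorem~\ref{extensions} directly. By the preceding Proposition, a general paracanonical curve $C\subset\PP^4$ of genus $6$ is ACM of codimension $c=3$ with a purely linear resolution tail, so both $\varphi_2$ (size $10\times 15$) and $\varphi_3$ (size $15\times 6$) are linear, and the hypothesis $i+1\le c$ of Theorem~\ref{extensions} is satisfied with $i=2$. The theorem then reduces the classification of $1$-extensions to an explicit piece of linear algebra plus the decomposition of an ideal cut out by products of universal linear matrices.

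I would carry out the construction on a concrete example. Pick a pair $(C,\eta)$ defined over a prime field $\FF_p$, chosen so that $\eta\notin C^2-C^2$ and $H^1(\sI_C(2))=0$, so that $C$ has the stated Betti table. Using the Macaulay2 package SurfacesAndExtensions \cite{STm2}, compute $\varphi_2,\varphi_3$ explicitly, solve the linear system
$$\tilde A\,\varphi_3+\varphi_2\,\tilde B=0$$
by equating coefficients to obtain the parameter space $\PP^m\subset\PP^N$ with $N=10\cdot 15+15\cdot 6-1=239$, and then assemble the ideal $eq$ generated by the $60$ entries of the product $A\cdot B$ of the universal linear matrices.

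The main combinatorial step is to decompose $V(eq)\subset\PP^m$ into its maximal linear subspaces. By Theorem~\ref{extensions}, a maximal linear component of projective dimension $n+e=4+e$ yields a maximal non-trivial $e$-extension family of $C$, so the $1$-extension families are exactly the components of dimension $5$. For a general $Y\subset\PP^5$ in such a family, $Y$ is a smooth ACM degree-$10$ surface with the Betti table $(1)$, and by the classification of \cite{Truong} it is determined up to family by the invariant $K_Y^2\in\{-6,\dots,0\}$. Computing $K_Y^2$ on one representative of each component should give the tallies $1,5,20$ for $K_Y^2=-5,-4,-3$; semi-continuity then transfers the counts from $\FF_p$ through $\QQ$ to $\CC$ as described in the introduction. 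For a general Prym-canonical curve ($\eta\in\Pic^0(C)[2]\setminus\{0\}$), the same procedure is expected to produce one additional linear component of $V(eq)$, matching the unique Fano polarised Enriques surface containing $C$ as a hyperplane section and contributing the extra family with $K_Y^2=0$.

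The main obstacle is the geometric identification of the components: on a smooth representative of each linear component of $V(eq)$ one must verify that $K_Y^2$ takes the claimed value and that distinct components correspond to genuinely different families, ruling out in particular any accidental merging over the chosen $\FF_p$. A secondary difficulty is excluding stray components peculiar to small characteristic, which is handled by the semi-continuity argument on the relative Hilbert scheme sketched in the introduction. Once these checks are in place, the numerical statement reduces to reading off the primary decomposition of $eq$ produced by the Macaulay2 computation.
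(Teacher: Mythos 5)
Your overall tool is the right one, and it is in fact the same tool the paper uses for half of the statement: Theorem~\ref{extensions} applied with $i=2$ to one example over $\FF_p$, followed by semi-continuity, is exactly how the paper proves the words ``and no other''. But your proposal tries to extract the other half --- the \emph{existence} of the $1+5+20$ families (and of the extra Enriques surface in the Prym case) for the general curve over $\CC$ --- from the same single computation, and this is where the argument breaks. Semi-continuity moves information in only one direction: the extension data of the generic curve specializes \emph{into} that of the special example, so the computation bounds the number of families of the general curve from \emph{above}; it cannot certify that the $26$ components found over $\FF_p$ all deform along with the general curve, i.e.\ it cannot certify that your chosen example was general. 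This is not a pedantic worry: the paper's Section~\ref{specialCurves} shows the component structure genuinely varies with $(C,\eta)$ (a hyperplane section of a nodal Enriques surface has $7$ families, a section of a surface with $K_Y^2=-2$ has only $16$ of type~\ref{3)})). The paper closes this gap geometrically: adjunction via $\varphi_{|K_Y+C|}$ shows any extension $Y$ falls into one of three types according to its adjoint surface $Y_1$, and intrinsic genus-$6$ geometry then produces the families for \emph{every} general curve --- the unique quintic del Pezzo surface containing the canonical model gives the one family with $K_Y^2=-5$, the five $g^1_4$'s give the five families with $K_Y^2=-4$, the finite degree-${6 \choose 3}=20$ difference map $\nu_{3}\colon C^{(3)}\times C^{(3)}\to\Pic^{0}(C)$ gives the twenty surfaces with $K_Y^2=-3$, and Verra's theorem \cite{Verra} supplies the Enriques extension of a general Prym curve. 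This lower bound, combined with the computational upper bound, is what yields equality; your proposal contains no substitute for it.

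A second, smaller error: your identification ``$1$-extension families $=$ components of $V(eq)$ of dimension $5$'' contradicts your own tallies. Under the bijection of Theorem~\ref{extensions}, a maximal linear subspace of dimension $n+e=4+e$ corresponds to a \emph{maximal} $e$-extension, and the computation finds one component of dimension $9$, five of dimension $7$, and twenty of dimension $5$. The dimension-$9$ and dimension-$7$ components correspond to maximal extensions $Z\subset\PP^{9}$ and $Z\subset\PP^{7}$; the $1$-extensions they account for are the $\PP^{5}$-sections of $Z$ containing the fixed $\PP^{4}$, which is precisely why cases $1)$ and $2)$ are positive-dimensional \emph{families} of surfaces, while only the twenty dimension-$5$ components give isolated $1$-extensions. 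Counting dimension-$5$ components alone would give $20$, not $26$, so the accounting step of your plan needs to be rewritten before the tallies $1,5,20$ can even be read off the decomposition.
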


\begin{proof}
Let $C \subset Y\subset \PP^{5}$ be an extension. Then $Y$ has the same Betti numbers as $C$. In particular $Y$ is an arithmetically Cohen-Macaulay variety,
$H^{1}(Y,\sO_{Y})=H^{1}(Y,\omega_{Y})=0$ and $H^{0}(Y, \omega_{Y})=0$.
By the adjunction sequence
$$ 0 \to \omega_{Y} \to \omega_{Y}(C) \to \omega_{C} \to 0$$
the canonical system on $C$ is cut out by the linear system $|K_{Y}+C|$,
and the canonical curve of $C$ is contained in the image $Y_{1}$ of $$\varphi_{|K_{Y}+H|}\colon Y\to \PP^{5}.$$
By \cite{SVdV87} $\varphi_{|K_{Y}+H|}\colon Y \to Y_{{1}}$ contracts all $(-1$)-lines, i.e., all $(-1)$ curves $E \subset Y \subset \PP^{5}$ 
which are embedded in $\PP^{5}$ as lines. Outside these lines $\varphi_{|K_{Y}+H|}$ is bi-regular. The rational map $Y_{1} \dasharrow Y$ is 
defined by the linear subsystem of $|C|$ on $Y_{1}$ with base points at the images of the exceptional lines.

The Betti table of the canonical image of $C$ decomposes
\begin{center}
\begin{tabular}{|ccccc|}
\hline
     1&\text{.}&\text{.}&\text{.}&\text{.}\\
     \text{.}&6&5&\text{.}&\text{.}\\
     \text{.}&\text{.}&5&6&\text{.}\\
     \text{.}&\text{.}&\text{.}&\text{.}&1\\ \hline
  \end{tabular} =
  \begin{tabular}{|cccc|}
  \hline
     1&\text{.}&\text{.}&\text{.}\\
     \text{.}&5&5&\text{.}\\
     \text{.}&\text{.}&\text{.}&1\\ \hline
  \end{tabular} $\otimes$
  \begin{tabular}{|cc|}
      \hline
      1&\text{.}\\
      \text{.}&1 \\ \hline
     \end{tabular}
\end{center}
indicating that the canonical curve $C \subset \PP^{5}$ is a complete intersection of a del Pezzo surface $D$ of degree $5$ and a quadric, see e.g. \cite{Sch86}.
Recall that over an algebraically closed field $D$ is isomorphic to $\PP^{2}$ blown up in $4$ points embedded by $|3L-\sum_{i=1}^{4} E_{i}|$,
where $L$ denotes the class of a general line and the $E_{i}$ denote the exceptional divisors.
Thus $C$ has a plane model of degree $6$ with $4$ double points. The curve $C$ has five $g^{1}_{4}$ of which four correspond  to a projection 
from a double point. The fifth is cut out on $C$ by the pencil of conics through the 4 double points.

\begin{enumerate}[$1)$]
\item The first case occurs if $Y_{1}=\varphi_{|K_{Y}+H|}(Y)$ coincides with this del Pezzo surface $D$. In that case $Y$ is the projection of $D$ re-embeddd with
$|6L-\sum_{i=1}^{4}2E_{i}|$ from ten points, to obtain the desired degree
$$\deg Y=10=6^{2}-4\cdot 2^{2}-10.$$
Thus $Y$ is isomorphic to $\PP^{2}$ blow-up in $14$ points, embedded by
$|6L-\sum_{i=1}^{4}2E_{i}-\sum_{j=5}^{14}E_{j}|$ and $K_{Y}^{2}=9-14=-5$.

\item In the second case $Y_{1}=\varphi_{|K_{Y}+H|}(Y)$ is a complete intersection of $\PP^{1}\times \PP^{2} \subset \PP^{5}$ with a quadric.
For each general curve $C$ we obtain 5 families of such surfaces as follows.
Consider the map 
$$
\varphi_{|D|\times |K_{C}-D|}\colon C \to \PP^{1}\times \PP^{2}\subset \PP^{5}
$$ 
where $|D|$ is one of the five $g^{1}_{4}$'s and
$|K_{C}-D|$ is the Brill-Noether dual $g^{2}_{6}$. The image of $\PP^{1}\times \PP^{2}$ in $\PP^{5}$  is defined the $2\times 2$ minors of the 
$2\times 3$ matrix of linear forms on $\PP^{5}$ obtained form
$$H^{0}(C,\sO(D))\times H^{0}(C,\omega_{C}(-D)) \to H^{0}(C,\omega_{C})\cong H^{0}(\PP^{5},\sO(1)).$$
Then $Y_{1}$ is the complete intersection of $\PP^{1}\times \PP^{2}$ and a quadric $Q$ defined by a further degree two generator of 
$I_{C}$. The surface $Y_{1}$
is a conic bundle over $\PP^{1}$ with six singular fibers. So $K_{Y_{1}}^{2}=8-6=2$. Since 
$$6=\deg Y_{1} =(C+K_{Y}).(C+K_{Y})=2(2\cdot 6-2)-C^{2}+K_{Y}^{2}=10+K_{y}^{2}$$ 
we have $K_{Y}^{2}=-4$. The surface $Y$ is obtained from $Y_{1}$ by blowing-up $\ell=K_{Y_{1}}^{2}-K_{Y}^{2}=2+4=6$ points. 

\item In the last case, $Y_{1} \subset \PP^{5}$ is linked via a complete intersection of three quadrics to a plane. Thus $Y_{1}$ has Betti table
$$\begin{matrix}
         & 0 & 1 & 2 & 3\\
         \hline
      0: & 1 & . & . & .\\
      1: & . & 3 & . & .\\
      2: & . & 1 & 6 & 3
      \end{matrix}$$
  By the adjunction process $Y_{1}$ is isomorphic to the blow-up of $\PP^{2}$ in $\ell$ points re-embedded by the linear system 
  $|4L-\sum_{j=1}^{\ell}E_{j}|$. So
  $7=\deg Y_{1}=16-\ell$ implies $\ell=9$ and hence $K_{Y_{1}}^{2}=0$.  On the other hand,
 since $7 =\deg Y_{1}=(C+K_{Y}).(C+K_{Y})=2(2\cdot 6-2 )-10+K_{Y}^{2}$ we have $K_{Y}^{2}=-3$ and hence $\varphi_{|C+K_{Y}|}\colon Y \to Y_{1}$ 
 blows down precisely $3$ points. So $Y$ is isomorphic to the blow-up of $\PP^{2}$ in $12$ points re-embedded by 
 $|7L-\sum_{j=1}^{9} 2E_{j}-\sum_{i=10}^{12} E_{i}|$.
 Such surfaces $Y$ arises form a given general paracanonical curve $C \subset \PP^{4}$ as follows. Let $\eta \in \Pic^{0}(C)$ be the bundle such that
$\sO_{C}(1)= \omega_{C} \otimes \eta$.
The difference map 
$$\nu_{3}\colon C^{(3)}\times C^{(3)} \to \Pic^{0}(C), \, (D,E) \mapsto \sO_{C}(D-E)$$
is surjective and finite to one of degree ${6 \choose 3}=20$ by \cite[Exercise D-3]{ACGH85}.
Pick $D=q_{1}+q_{2}+q_{3}$ and $E=p_{1}+p_{2}+p_{3}$ whose difference map to $\eta$ and consider the canonical embedding 
$C \hookrightarrow \PP^{5}$. Let $P=\langle q_{1},q_{2},q_{3} \rangle \subset \PP^{5}$ be the plane spanned by $\{q_{1},q_{2},q_{3}\}.$
There exists a $3$-dimensional subspace $V \subset H^{0}(\PP^{5},\sI_{C}(2))$ of quadrics containing the plane $P$:
Each quadric in $H^{0}(\PP^{5},\sI_{C}(2))$ vanishes already in $\{q_{1},q_{2},q_{3}\}$. Hence the condition to vanish on $P$ 
imposes only $3 =h^{0}(\PP^{2},\sO(2))-3$ further linear conditions on the $6$-dimensional space $H^{0}(\PP^{5},\sI_{C}(2))$.
For general choices of $C$ and $D$ these three quadrics form a complete intersection whose zero loci decomposes
as $T_{7}  \cup P$, which $T_{7}$ the image of a $\PP^{2}$ blown-up in $9$ points embedded by
$$
|4L- \sum_{j=1}^{9} E_{j}|
$$
as above. Since $C \subset T_{7}$ the points $\{p_{1},\ldots,p_{3}\}$ give further three points on $T_{7}$.
The image $Y$ of $\PP^{2}$ blown-up in $12$ points embedded by $|7L-\sum_{j=1}^{9}2E_{j}-\sum_{i=10}^{12}E_{i}|$ is our desired surface $Y$
whose first adjoint surface $Y_{1}$ coincides with $T_{7}$. The surfaces $P$ and $T_{7}$ intersect in a plane cubic curve, whose class on $T_{7}$ is $3L-\sum_{i=1}^{9} E_{i}$. Since $(3L-\sum_{i=1}^{9} E_{i}).(7L- \sum_{i=1}^{9} 2E_{i})=3$
this cubic intersects $C$ precisely in the points $\{q_{1},q_{2},q_{3}\}$ and
$\eta=\sO_{Y}(-K_{Y})\otimes \sO_{C} \cong \sO_{C}(D-E)$.
  
 \end{enumerate}
 
 This proves that a general paracanonical curve $C \subset \PP^{5}$ has at least 26 families of extensions. To prove that equality holds it suffices 
 to establish that 
 there no other extensions in a single example over a finite field by semi-continuity. We check this using the technique from Theorem \ref{extensions}.
 In our example the vanishes loci $V(eq)$ decomposes in $26$ linear spaces of dimension $9$, $7$ and $5$ respectively, as running the 
 code printed out by the function extensionsOfAGeneralParacanonicalCurve in our Macaulay2 package
 SurfacesAndExtensions \cite{STm2} shows. In particular the collection of the twenty surfaces $Y$ in \ref{3)}) are uniquely determined by $C$.
 
 If $C$ is a general Prym canonical curve, then by \cite{Verra} $C$ lies on an Enriques surface. So we have
 \begin{enumerate}[$4)$]
 \item $C$ is the hyperplane section of a Fano polarized Enriques surface. 
 \end{enumerate}
 Thus in the Prym case there are at 
 least $27$ families of extensions. Running the Macaulay2 code printed by our function extensionsOfAGeneralPrymCanonicalCurve  of our package
 SurfacesAndExtensions \cite{STm2} shows that there are no further families.
\end{proof}

 \begin{thm}\label{thm2} Let $C\subset \PP^{4}$ be a general paracanonical curve which is isomorphic to smooth plane quintic. Then $C$ has  a
 single family of $1$-extensions $Y\subset \PP^{5}$. All $Y$'s are $\PP^{5}$ sections of the generic determinantal variety in $\PP^{14}$. 
 For a general Prym canonical curve $C$, which is isomorphic to a smooth plane quintic, there are two families of $1$-extensions.
 Apart from the determinantal family there is the unique Fano polarized Enriques surface $Y$ which contains $C$ as a hyperplane section.
 \end{thm}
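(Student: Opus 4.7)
The plan is to mirror the strategy of Theorem \ref{thm1}: first exhibit the claimed families of extensions by geometric construction, then verify maximality by applying Theorem \ref{extensions} to an explicit example and concluding by semi-continuity.

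First I would produce the determinantal family. The generic determinantal variety $Z \subset \PP^{14}$, cut out by the $3\times 3$ minors of a generic $3\times 5$ matrix of linear forms, has codimension $3$ and exactly the Betti table \eqref{betTab}; as noted in the introduction, a general linear section $C = Z \cap \PP^{4}$ is isomorphic to a smooth plane quintic. For any intermediate linear subspace $\PP^{4} \subset \PP^{5} \subset \PP^{14}$ the section $Y = Z \cap \PP^{5}$ is a surface extension of $C$ with the same Betti table, and varying $\PP^{5}$ gives a positive-dimensional family, all of whose members are by construction $\PP^{5}$-sections of $Z$, hence determinantal. Using that a plane quintic has a unique (up to $\PGL_{3}$) determinantal presentation, one checks that in fact every extension of this type is obtained in this way.

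For the extra family in the Prym case I would invoke \cite{Verra} to get that a general Prym canonical curve $C \subset \PP^{4}$ of genus $6$ lies as a hyperplane section on a unique Fano polarized Enriques surface $Y \subset \PP^{5}$. Restricting to the locus of pairs $(C,\eta)$ with $\eta$ a non-trivial $2$-torsion element and $C$ isomorphic to a smooth plane quintic, this still cuts out a non-empty sublocus, and on it the Enriques extension persists and is distinct from the determinantal one (since $K_{Y}^{2}=0$ for the Enriques surface while the determinantal extensions have $K_{Y}^{2}<0$).

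To conclude that there are no further extensions I would apply Theorem \ref{extensions} to a concrete paracanonical (respectively Prym-canonical) smooth plane quintic $C$ defined over a finite prime field $\FF_{p}$, using the second and third differentials of its minimal free resolution to write down the matrices $A$ and $B$ and the ideal $eq$. One then verifies by direct computation in Macaulay2, paralleling the functions extensionsOfAGeneralParacanonicalCurve and extensionsOfAGeneralPrymCanonicalCurve from \cite{STm2} but specialized to plane-quintic curve models, that the vanishing locus $V(eq) \subset \PP^{m}$ decomposes into exactly one maximal linear subspace in the paracanonical case and exactly two in the Prym case. Semi-continuity of the number and dimensions of the components of $V(eq)$ then propagates the bound to the general member in characteristic $0$.

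The main obstacle is choosing the explicit plane quintic example carefully enough: smooth plane quintic paracanonical curves form a proper sublocus of all paracanonical curves, so the $26$ (or $27$) components found in Theorem \ref{thm1} must specialize and merge into a single positive-dimensional family (and the Enriques component in the Prym case). A too-special example over $\FF_{p}$ could introduce spurious extra components, which would give a bound in the wrong direction; thus the delicate point is to construct a plane-quintic example whose $V(eq)$ truly attains the minimal number of components, and then appeal to semi-continuity to transport the conclusion from $\FF_{p}$ to $\QQ$ and hence to $\CC$, as in the proof of Theorem \ref{thm1}.
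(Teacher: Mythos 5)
Your computational endgame --- apply Theorem \ref{extensions} to an explicit plane-quintic example over $\FF_{p}$, read off the components of $V(eq)$, and conclude by semi-continuity --- is exactly the paper's mechanism, and your caveat that the example must be general \emph{within the plane-quintic locus} is well taken (the paper handles this with the dedicated functions paracanonicalEmbeddedPlaneQuintic and prymCanonicalEmbeddedPlaneQuintic). But the theoretical half of your argument has a genuine gap. You never prove that the \emph{given} paracanonical embedding $\Phi_{L}(C)\subset \PP^{4}$, $L=\omega_{C}\otimes\eta$, is a $\PP^{4}$-section of the determinantal variety $Z\subset\PP^{14}$; you only observe that general sections of $Z$ are plane quintics with \emph{some} degree-$10$ embedding, and then bridge the converse with the claim that ``a plane quintic has a unique (up to $\PGL_{3}$) determinantal presentation.'' That claim is false: linear determinantal representations of a smooth plane quintic are in bijection with line bundles of degree $10$ off a translate of the theta divisor, so they form a positive-dimensional family of inequivalent representations, and the entire issue is whether the specific bundle $L$ occurs among them. (Even if you meant only that the plane model, i.e.\ the $g^{2}_{5}$, is unique, that says nothing about $L$.) The paper's proof supplies precisely this missing step: take an effective $D$ of degree $10$ with $\sO_{C}(D)\cong L$ on the plane model; if $D$ lay on a cubic, then $|D|$ would be cut out by cubics through the $5$ residual points of $C\cap F$, so $\Phi_{L}(C)$ would lie on a quartic del Pezzo and hence on two quadrics, contradicting projective normality (the Betti table has no quadrics); therefore $D$ lies on no cubic, Hilbert--Burch gives a $5\times 4$ matrix of linear forms for $D$, the quintic equation of $C$ is a linear combination of its maximal minors, and reinterpreting the resulting $5\times 5$ linear matrix $\varphi$ as a triple tensor produces the $3\times 5$ matrix of linear forms on $\PP^{4}$ whose $3\times 3$ minors cut out $C$.

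The Prym half has a parallel gap. Verra's theorem \cite{Verra} concerns \emph{general} Prym canonical curves, while smooth plane quintics form a proper sublocus (codimension $3$) of $\fkM_{6}$; a statement proved for the general pair does not automatically persist on a special locus, so ``the Enriques extension persists'' is an assertion, not an argument. The paper instead extracts the Prym structure from the determinantal picture just described: transposing $\varphi$ gives the embedding by $\omega_{C}\otimes\eta^{-1}$, so when $\eta$ is $2$-torsion the matrix $\varphi$ can be chosen symmetric, and it is this symmetric case that carries the additional extension to the Fano polarized Enriques surface. Your outline would be repaired by inserting these two arguments (or independent substitutes); as written, both existence statements rest on claims that are either false or not applicable to the plane-quintic locus, and only the non-existence (maximality) part is sound.
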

 
 \begin{proof} A projectively normal paracanonical curve $C$ which is isomorphic to a smooth plane quintic is determinantal. Indeed, consider $C \subset \PP^{2}$ 
 the plane model and an effective divisor of $D$ of degree $10$ with $\sO_{C}(D)\cong \omega_{C}\otimes \eta$. 
 If the scheme $D \subset C \subset \PP^{2}$ is contained in a cubic $F$, then $|D|$ is cut out by the cubics through the residual $5$ points of  $C\cap F$ to $D$.
 In particular the image of $C$ is contained in a (possibly singular) del Pezzo surface of degree $4=3^{2}-5$ and hence in two quadrics. 
 Hence $C \subset \PP^{4}$ is not projectively normal, a contradiction. So $D\subset \PP^{2}$ is not contained in a cubic. We conclude that $D$ is contained in ${4+2 \choose 2}-10=5$ quartics
 which are the minors of a $5\times 4$ linear Hilbert-Burch matrix \cite[Chapter 20]{Eisenbud} of $D$. The inclusion $D \subset C$, says that the equation of $C$ is a linear combination
 of these minors. In other words, $C\subset \PP^{2}$ is defined by a $5\times 5 $ matrix $\varphi$ of linear forms. Interpreting the triple tensor
 $$\phi \in H^{0}(\PP^{2},\sO(1)) \tensor H^{0}(C,L) \tensor V$$
 as a $3\times \dim V$ matrix $m_{3\times 5}$ of linear forms in $H^{0}(C,L)=H^{0}(\PP^{4}, \sO(1))$ we see that $C \subset \PP^{4}$ is defined by the $3\times 3$-minors of
 $m_{3\times 5}$.
 
 Note that the transpose matrix $\varphi^{t}$ leads to the paracanonical embedding by $\omega_{C}\tensor \eta^{{-1}}$. In particular we see that for a projectively normal Prym canonical curve, the matrix $\varphi$ above can be chosen to be symmetric.
 Since $C\subset \PP^{4}$ is determinantal we have the family of determinantal $1$-extensions. In the symmetric case there is in addition the $1$-extension to the Fano polarized Enriques
  surface. In the documentation of our functions  paracanonicalEmbeddedPlaneQuintic and
prymCanonicalEmbeddedPlaneQuintic of our package SurfacesAndExtensions \cite{STm2} shows that for general $C$ there are no further extension.
  \end{proof}

\section{Maximal extensions}\label{maxExtension}

Smooth surfaces $Y \subset \PP^{5}$ with Betti table (\ref{betTab}) have been classified in \cite{Truong}. The main result is summarized in Table \ref{table1}. A result which surprised us, is that the obstruction ideal $eq$ for extending a general surface in this list is always zero. We prove this by computing the extension of examples over a finite field, see the documentation of our function 
threeRegularSurfacesOfCodim3 of out package SurfacesAndExtensions \cite{STm2} and apply semi-continuity.

\begin{small}
\begin{table}[h!]
\centering
		\begin{tabular}{ |r| c| c| c|c|c|c|}
	\hline
	  $K^2_Y$ &  structure of $Y$ & Betti table of $Y_{1}$ &$\deg Y_{1}$ &  curve section &$n+e$ &max. extension $Z$\\
	  	\hline
	$-6$  & $\Bbb P^2(5;1^{15})$  &
	$\begin{matrix}
	0: & 1 & . & . & .\\
	1: & . & 6 & 8 & 3
	\end{matrix}$ 
	& 4 &
	\begin{tabular}{c}
	 paracanonical \\
	 model of a \\
	 plane quintic
	 \end{tabular} 		
	 &  $14$ &\begin{tabular}{c}
	 $3\times 5$
	determinantal\\
	secant  variety of\\
	$\PP^{2}\times \PP^{4} \subset \PP^{14}$
	\end{tabular} \\ 
\hline
	$-5$   & $\Bbb P^2(6;2^4,1^{10})$  & 
	$\begin{matrix}
   	0: & 1 & . & . & .\\
   	1: & . & 5 & 5 & .\\
   	2: & . & . & . & 1
      \end{matrix}$ 
      & $5$&
      \begin{tabular}{c}
	  general \\
	 paracanonical \\
	 curve 
	 \end{tabular} 	
      &$9$& 
      \begin{tabular}{c}
	  base locus of a
	 cubic\\-quadric Cremona\\
	 transformation of $\PP^{9}$
	 \end{tabular} 
      \\ 
\hline
	$-4$   & $\Bbb P^2(7;3^1,2^6,1^{6})$  & 
	$\begin{matrix}
	0: & 1 & . & . & .\\
	1: & . & 4 & 2 & .\\
	2: & . & . & 3 & 2
	\end{matrix}$
	& $6$ &
	\begin{tabular}{c}
	  general \\
	 paracanonical \\
	 curve 
	 \end{tabular} 	
	& $7$& 
	\begin{tabular}{c}
	  projection of the image of\\ 
	  the $\PP^{1}$-bundle 
	  $\PP(\sF)$ in $\PP^{13}$ \\
	  from $6$ points  where $\sF$ is\\
	  $\sO_{\PP^{1}\times \PP2}(1,0)\oplus \sO_{\PP^{1}\times \PP^{2}}(1,2))$\\		 
	 \end{tabular} 	\\ 
\hline
	$-3$  & $\Bbb P^2(7;2^9,1^{3})$  & 
	$\begin{matrix}
	0: & 1 & . & . & .\\
	1: & . & 3 & . & .\\
	2: & . & 1 & 6 & 3
\end{matrix}$
	&$7$&
	\begin{tabular}{c}
	  general \\
	 paracanonical \\
	 curve 
	 \end{tabular} 	
	 & $5$ &\% \\ 
\hline
	$-2$   & $\PP^2(9;3^6,2^4,1^{1})$  &
	$\begin{matrix}
	0: & 1 & . & . & .\\
	1: & . & 2 & . & .\\
	2: & . & 4 & 9 & 4
\end{matrix}$	
	 &$8$ &
	 \begin{tabular}{c}
	special pair \\ 
	$(C,\eta)$	
	\end{tabular}
 &6 & \begin{tabular}{c}
	  blow-up of a toric\\
	  complete intersection\\
	  of two quadrics in $\PP^{5}$
	 \end{tabular} 	\\ 
\hline	
	$-1$   & $\Bbb P^2(10;3^{10})$  & 
	$\begin{matrix}
	0: & 1 & . & . & .\\
	1: & . & 1 & . & .\\
	2: & . & 7 & 12 & 5
	\end{matrix}$	
	&$9$&
	\begin{tabular}{c}
	special pair \\ 
	$(C,\eta)$	
	\end{tabular}
	 & $5$& \% \\ 
\hline
	$0$  & 
	\begin{tabular}{c}
	Fano model of an\\ 
	Enrique surface 
	\end{tabular}&  
	$\begin{matrix}
	0: &1 & . & . & .\\
	1:&. & . & . & .\\
	2: & . & 10 & 15 & 6
	\end{matrix}$	
	&10&
	\begin{tabular}{c}
	  Prym \\
	 canonical curve 
	 \end{tabular} 	
	 &
	 $5$& \% \\ 
\hline
	$0$  & 
	 \begin{tabular}{c} Fano model of\\
	 a nodal\\
	 Enriques surface  
	 \end{tabular}
	 &
	$\begin{matrix}
       1 & . & . & . & . & .\\
       . & 1 & . & . & . & .\\
        . & 4 & . & . & . & .\\
        . & . & 14 & 15 & 6 & 1
        \end{matrix}$
	   &$10$&
	   \begin{tabular}{c}
	  special Prym \\
	  canonical curve 
	  \end{tabular} 	   
	   &9&\begin{tabular}{c} secant variety of\\
	   the Veronese 3-fold \\$v_{2}(\PP^{3})\cong \overline{SO(3)} \subset \PP^{9}$
	 \end{tabular}\\ 
\hline	
\end{tabular}\vspace{0.2cm}

\caption{ Smooth arithmetically CM surfaces $Y \subset \PP^{5}$  of degree $10$ over an algebraically closed field of characteristic zero, their first adjoint $Y_{1} \subset \PP^{5}$
and comments on their maximal extension.}
\label{table1}
\end{table}

\end{small}
The description of the maximal extensions uses  computations as well. Details can be found by running the code which our function
identifyMaximalExtensions provides. A summary of our findings is the following:

\begin{enumerate}
\item[-6)]  The $2\times 2$ minors of the generic $3\times 5$ matrix define the Segre product of $\PP^{2}\times \PP^{4} \subset \PP^{14}$. Hence the $3\times 3$ minors define its secant variety.

\item[-5)] The best way to describe the maximal extension $Z$ is via a Cremona transformation. The ten cubic equations of $Z$
define a birational map $\sigma\colon  \PP^{9} \dasharrow \PP^{9}$ whose exceptional divisor is a quintic, which is the annihilator
 of  the cokernel of the $10\times 10$ jacobian matrix of the equations. The inverse Cremona transformation $\tau$ is defined by quadrics
 which define a $4$-fold base locus $W\subset \PP^{9}$. The easiest way to get the pair $\sigma, \tau$ is via $W$. The variety $W$
 is the image of the rational map $\PP^{4} \dasharrow \PP^{9}$ defined by the $10$ quadrics which generate the homogeneous ideal of the $5$ coordinate points of $\PP^{4}$.

\item[-4)] The adjoint variety of $Z$ is $\PP^{1}\times \PP^{2}\subset \PP^{5}$. The rational map $Z \dasharrow \PP^{1}\times \PP^{2}$ has generically fibers isomorphic to $\PP^{1}$. There are precisely $6$ fibers which are not isomorphic to $\PP^{1}$. Those fibers are $\PP^{3}$'s.  

The variety $Z$ arises as follows. Consider the  $\PP^{1}$-bundle $\PP(\sF)$ over $\PP^{1}\times \PP^{2}$ for the split bundle $$\sF=\sO_{\PP^{1}\times \PP^{2}}(1,0)\oplus \sO_{\PP^{1}\times \PP^{2}}(1,2).$$
The morphism $\varphi_{|\sO_{\sF}(1)|}\colon \PP(\sF)\to Z' \subset \PP^{13}$ contracts the section corresponding to the first summand to a line and is bi-regular otherwise. $Z$ is the projection of $Z'$ from six points.
The projection collapses the six fibers of $\PP(\sF) \to \PP^{1} \times \PP^{2}$ containing a projection point to six isolated singularities, otherwise the projection is bi-regular.
The singular loci of $Z$ consists of the projected line and the six isolated points. Up to automorphism of $\PP(\sF)$ and $\PGL(8)$ the $Z$'s form a family of dimension
$6\cdot 4-(3+8+{2+2 \choose 2}+1)=6$ and taking sections with a $\PP^{5}$ leads to a $18$-dimensional family of surfaces. This coincides with number parameters $13\cdot 2-8$ for surface of type
$\PP(7;3,6^{2},6^{1})$.

\item[-2)]  The $3$-fold $Z$ is a blow-up of a toric complete intersection $Z'$ of two quadrics in $\PP^{5}$. More precisely, in suitable coordinates the complete intersection $Z'$ is defined by the two quadrics $$x_{1}x_{4}-x_{2}x_{5},\,x_{0}x_{3}-x_{2}x_{5}.$$
The complete intersection $Z'$ contains $8$ $\PP^{2}$'s. They correspond to the colomns of the 
skew symmetric matrices
$$\left(\begin{matrix}
       0&x_{1}&x_{2}&x_{3}\\
       -x_{1}&0&x_{0}&x_{5}\\
       -x_{2}&-x_{0}&0&x_{4}\\
       -x_{3}&-x_{5}&-x_{4}&0
       \end{matrix}\right)   \hbox{ and } 
\left(\begin{matrix}
       0&-x_{4}&x_{5}&-x_{0}\\
       x_{4}&0&-x_{3}&x_{2}\\
       -x_{5}&x_{3}&0&-x_{1}\\
       x_{0}&-x_{2}&x_{1}&0
       \end{matrix}\right)$$
The rational map $Z' \dasharrow Z$ blows up a  lines in each of the four $\PP^{2}$'s corresponding to the first matrix and the point $(1:1:1:1:1:1)$. The rational map is defined by seven cubics.  It collapse the strict transforms of these
planes to singular points, which are analytically isomorphic to a cone over the Veronese surface. There are 4 different descriptions of the rational map. Each of them
has four codimension one base loci.  They are $\PP^{2}$'s corresponding  to one of the columns of the first matrix and three colomns of the second matrix.

For more details we refer to the function identifyMaximalExtension of our package SurfacesAndExtensions \cite{STm2}. Up to projectivities, we have a $8=4\cdot 2$-dimensional family of maximal extended varieties $Z$ of this type, which give rise to
$14=8+6$-dimensional family of surfaces. This coincides with the dimension $2\cdot11-8$ of the family  of surfaces of type $\PP^2(9;3^6,2^4,1^{1})$.

\item[0)] The maximal extension $Z$ of a nodal Enriques surface  has as adjoint variety the Pl\"ucker quadric  $Q\subset \PP^{5}$.
$Z$ is birational to the projective bundle of a rank $3$ vector bundle $\sF$ obtained as the second syzygy sheaf of 15 of the 16 sections
of four copies of the spinor bundle $\sS$ on $Q$. Over the coordinate ring  $S_{Q}$ of $Q$ the bundle   $\sF$ sits in an  infinite  
complex with Betti numbers
$$
\begin{tabular}{cccc cccccc}
 16& 16 &15 & 10 & .   & .      & .   &  .  &  .  & \\
 .   &   .  &  .  &  .   & 10 & 15 & 16 &16 &16 &\ldots
\end{tabular}
$$
Up to symmetries of the total space of four copies of the spinor bundle  on $Q=\GG(2,4)$ there is only one choice of such a morphism $\sO_{Q}^{15} \to \sS^{4}$ and a direct computation shows that $Z$ can be identified with the secant variety of the Veronese $3$-fold $v_{2}(\PP^{3})\subset \PP^{9}$. 

Also in this case the equations of $Z$ define a Cremona transformation. This time it a self-dual Cremona transformation defined by the partials of the quartic, which defines $\Sec^{3}(v_{2}(\PP^{3}))\subset \PP^{9}$, c.f. \cite{EinShepherd-Barron}.

The nodal character of the general surface section $Y$ is the following: There are $20$ half-pencils, i.e. plane cubic curves $C_{i}$ in $Y$. They come in pairs $C_{i}$, $C_{i}'$ of numerically equivalent curves, which differ by the canonical class. 
Suitable labeled we have that the hyperplane $H$ class of $Y$ satisfies
$$H\sim \frac{1}{3}( C_{1}+\ldots +C_{10}).$$
These curves are called half-pencils, because $2C_{i} \sim 2C_{i}'$. 
The pencils are cut out on $Y$ by a pencil of hyperplanes
which have a smooth rational quartic curve
 $R_{i}$ as base locus and the general fiber of the pencil are smooth elliptic curves of degree $6$, c.f. \cite{DolgachevKondoBook}.
 
 In case of a general Enriques surface $Y \subset \PP^{5}$ the pencils are cut out by a pencil of quadrics with a base curve of degree 
$14$ and genus $9$. 
\end{enumerate}

\section{Families of pairs $(Y,C)$}\label{familiesOfPairs}

Let $\calE_{a}$ denote the  irreducible component of the moduli space  of polarized projective surface $(Y,H)$ with $H^{2}=10$, sectional genus $6=\frac{1}{2}H.(H+K)+1$ and canonical divisor $K^2=-a$ for $a=0,\ldots,6$ corresponding to one of our seven families of Table \ref{table1}.
The family of rational surfaces have dimension
$$
\dim \sE_{a}=10+2a
$$
since they depend on the choice of $2(9+a)$ points on $\PP^{2}$ up to the action of $\PGL(3)$.
This formula holds also for the family $\sE_{0}$ of Fano polarized Enriques surface.

Denote by $\sEC_{a}$ the moduli space of pairs $(Y,C)$ where $Y$ is a member of $\calE_{a}$ and $C \in |H|$ is a smooth irreducible curve. There are natural morphisms
$$
\xymatrix{&&\sEC_{a}\ar[dl]_{\varphi_a}\ar[dr]^{\rho_a}\ar[d]^{\chi_a}&\\
&\sE_{a}&\fkP ic^{0}_{{6}}\ar[r]&\fkM_6}
\xymatrix{&&\sEC_{0}\ar[dl]_{\varphi_0}\ar[dr]^{\rho_0}\ar[d]^{\chi_0}&\\
&\sE_{0}&\sR_{6}\ar[r]&\fkM_6.}
$$
for $a>0$ and for $a=0$ where $\sR_{6}$ denotes the moduli space of Prym curves of genus $6$. The map
$\sR_{6}\to \fkM_6$ is a covering of degree $2^{2\cdot 6}-1$.
 The computation in the proof of Theorem \ref{thm1} gives a rational inverse $s\colon \sR_{6} \dasharrow \sEC_{0}$ of $\chi_{0}$. 
 
 \begin{cor}[\cite{CDGK,Verra}]
 The moduli space $\sEC_{0}$ of pairs $(Y,C)$ of a Fano polarized Enriques surfaces $Y$ together with a curve $C \in |H|$ and $
 \sR_{6}$ are birational. In particular $\sR_{6}$ is unirational.
\end{cor}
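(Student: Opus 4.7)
The plan is to exploit the uniqueness in Theorem \ref{thm1}(4) to exhibit an explicit rational inverse to $\chi_{0}$, and then to piggyback on the (classical) unirationality of the moduli space of Fano polarized Enriques surfaces.

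First, I would make $\chi_{0}$ explicit. Given $(Y,C)\in \sEC_{0}$, set $\eta := \omega_{Y}|_{C}$. Since $2K_{Y}=0$ but $K_{Y}\ne 0$ on an Enriques surface, $\eta$ is a non-trivial $2$-torsion element of $\Pic^{0}(C)$. Adjunction gives
$$\omega_{C}\cong \omega_{Y}(C)|_{C}\cong \sO_{C}(1)\otimes \eta,$$
so the hyperplane embedding $C\hookrightarrow Y\subset \PP^{5}$ realises $C$ as its Prym canonical model associated to the Prym datum $\eta$; thus $\chi_{0}(Y,C)=[(C,\eta)]$.

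Next, Theorem \ref{thm1}(4) supplies a well-defined rational inverse $s\colon \sR_{6}\dashrightarrow \sEC_{0}$: for a general $[(C,\eta)]\in \sR_{6}$ the Prym canonical model $C\subset \PP^{4}$ extends to a \emph{unique} Fano polarized Enriques surface $Y\subset \PP^{5}$, and one sets $s([(C,\eta)])=(Y,C)$. By construction $\chi_{0}\circ s=\mathrm{id}$ on a dense open, while $s\circ \chi_{0}=\mathrm{id}$ follows from the uniqueness of the Enriques extension: two pairs in $\sEC_{0}$ with the same image in $\sR_{6}$ share the same Prym canonical model $C\subset \PP^{4}$, and hence share the same $Y$. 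This shows that $\chi_{0}$ is birational.

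Finally, for unirationality, the moduli space $\sE_{0}$ of Fano polarized Enriques surfaces is classically unirational, and the forgetful map $\varphi_{0}\colon \sEC_{0}\to \sE_{0}$ has fibres that are dense open subsets of the $5$-dimensional complete linear systems $|H|$, so $\sEC_{0}$ is birational to a $\PP^{5}$-bundle over $\sE_{0}$ and is therefore unirational. Transport across the birational map $\chi_{0}$ then yields the unirationality of $\sR_{6}$. The only step requiring genuine input is the verification that the locus of Prym canonical curves to which Theorem \ref{thm1}(4) applies is dense in $\sR_{6}$; but this has already been secured in the proof of that theorem via the finite-field extension computation combined with semi-continuity, so no new obstacle arises.
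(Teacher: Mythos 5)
Your proposal is correct and takes essentially the same route as the paper: the paper's entire proof is the remark that the uniqueness statement in Theorem \ref{thm1}, part 4), supplies a rational inverse $s\colon \sR_{6}\dasharrow \sEC_{0}$ of $\chi_{0}$, and this is exactly what you construct, with the adjunction identity $\sO_{C}(1)\cong \omega_{C}\otimes\eta$, $\eta=\omega_{Y}|_{C}$, making $\chi_{0}$ explicit and the two compositions equal to the identity on dense opens. Your unirationality bookkeeping (the $\PP^{5}$-fibration $\varphi_{0}\colon \sEC_{0}\to\sE_{0}$ over the known unirational moduli space $\sE_{0}$ of Fano polarized Enriques surfaces) also matches how the paper handles the corresponding statements, so nothing is missing.
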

  
  Thus the computation based on Theorem \ref{extensions} gives a direct proof this result of  \cite{CDGK} which improved the result of \cite{Verra}.
 
 The map $\chi_{a}$ sends a pair $(Y,C)$ to the pair $(C,\eta)$ where $\eta$ is the line bundle of degree $0$ on $C$ which satisfies
 $\omega_{C}\tensor \eta = \sO_{C}(C)$.
 
 The fibers of $\varphi_{a}$ are isomorphic to the open part of smooth curves in $|H|\cong \PP^{5}$. 
 
 \begin{cor} \label{families}The maps $\chi_{a}\colon \sEC_{a} \to \fkP ic^{0}_{6}$ are dominant for $a=3,4,5$. In case of
 \begin{enumerate} 
 \item[$a=5$,] the fibers are $\PP^{4}$'s,
\item[$a=4$,] the Stein factorization of $\chi_{a}$ factors over $\fkP ic^{0}_{6}\times_{\fkM_{6} } \fkM_{6,4}^{1}$ with generic fibers
 $\PP^{2}$'s, where $\fkM_{6,4}^{1}=\{(C,g^{1}_{4})\} \to \fkM_{6}$ denotes the birational $5$-sheeted cover of pairs of a curve of genus $g=6$ together with a $g^{1}_{4}$, and
 \item[$a=3$,] $\sEC_{3}$ is birational to the universal difference variety $\sC^{3,3}$ which is a generically a $20:1$ cover 
 of $\fkP ic_{6}^{0}$.
\end{enumerate}
In particular, $\fkP ic^{0}_{6}$, $\fkM_{6,4}^{1}$ and $\sC^{3,3}$ are unirational.
\end{cor}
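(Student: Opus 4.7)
The plan is to read the fibers of $\chi_a$ directly off the three explicit constructions of $Y$ appearing in the proof of Theorem \ref{thm1}, and then to check the dimensions against those of the proposed targets. Throughout, the key identification is $\eta=-K_Y|_C$, which follows from $\omega_C=(K_Y+C)|_C$ and $\sO_C(C)=\sO_C(H)$ on $Y$; this converts each fiber into a linear-system problem on $C$.

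First I would set up the dimension count. Since a general member of $\sE_a$ is a rational surface obtained by blowing up $9+a$ points in $\PP^2$ and $\sEC_a\to \sE_a$ is generically a $|H|\cong\PP^5$-bundle, one has $\dim \sEC_a=15+2a$ while $\dim \fkP ic^0_6=21$, giving expected generic fiber dimension $2a-6=4,2,0$ for $a=5,4,3$. For $a=5$ the first adjoint del Pezzo quintic $Y_1$ is intrinsic to $C$, so the only freedom is the choice of the ten blow-up points on $C\subset Y_1$; pulling back $-K_Y=-K_{Y_1}+\sum_{j=5}^{14}E_j$ to $C$ yields $\sum_{j=5}^{14}p_j\sim \omega_C\otimes \eta^{-1}$, which is a $g^4_{10}$ by Riemann--Roch for generic $\eta$, and the fiber is the open part of the corresponding $\PP^4$. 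For $a=4$ the choice of the $\PP^1$-factor in $Y_1\subset\PP^1\times\PP^2$ corresponds to one of the five $g^1_4$'s on $C$, which forces the Stein factorization of $\chi_4$ to pass through $\fkP ic^0_6\times_{\fkM_6}\fkM^1_{6,4}$. Once $D=g^1_4$ is fixed, $Y_1$ is cut out by a unique quadric compatibly with $C$; using $K_{\PP^1\times \PP^2}=-2\sigma-3\pi$ and adjunction for the quadric section gives $-K_{Y_1}|_C=|K_C-D|$, the Brill--Noether dual $g^2_6$, so the six blow-up points must sum to a divisor in $|K_C-D-\eta|$, a degree-$6$ system of projective dimension $2$ for generic $(C,\eta)$. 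For $a=3$ the proof of Theorem \ref{thm1}(3) constructs $(Y,C)$ explicitly from a pair $(D,E)\in C^{(3)}\times C^{(3)}$ with $\sO_C(D-E)\cong \eta$, and the converse reads off $(D,E)$ from $(Y,C)$ via the exceptional divisors of $Y\to T_7$ and the cubic section $T_7\cap P$; this identifies $\sEC_3$ birationally with $\sC^{3,3}$, and $\sC^{3,3}\to\fkP ic^0_6$ has degree $\binom{6}{3}=20$ by \cite[Exercise D-3]{ACGH85}.

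Unirationality then follows formally. Each $\sE_a$ for $a\in\{3,4,5\}$ is dominated by an open subset of $(\PP^2)^{9+a}$ modulo $\PGL(3)$ and is therefore unirational; since $\sEC_a$ is a $\PP^5$-bundle over $\sE_a$, so is $\sEC_a$. Dominance of $\chi_5$ (or $\chi_3$) gives unirationality of $\fkP ic^0_6$, the factorization in case $a=4$ yields unirationality of the fiber product and hence of $\fkM^1_{6,4}$, and the birational identification in case $a=3$ gives unirationality of $\sC^{3,3}$.

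The hardest step is the identification of the fiber in case $a=4$: one must compute the exact line bundle governing the six blow-up points and verify that its generic $h^0$ equals $3$. This is delicate because $Y_1$ itself depends on the choice of $g^1_4$ and the containment of $C$ in a specific quadric section of $\PP^1\times\PP^2$. If a purely adjunction-theoretic verification is awkward, it can be replaced by a single computer-algebra check on a generic example together with semi-continuity, exactly analogous to the strategy already used in the proof of Theorem \ref{thm1}.
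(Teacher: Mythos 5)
Your overall route is the same as the paper's: read the fibers of $\chi_{a}$ off the three constructions in the proof of Theorem \ref{thm1}, convert each fiber into a linear-system problem on $C$ via the identification $\eta=-K_{Y}|_{C}$, check dimensions against $\dim\sEC_{a}=15+2a$ and $\dim \fkP ic^{0}_{6}=21$, and deduce the unirationality statements formally from unirationality of $\sE_{a}$ and $\sEC_{a}$. Your cases $a=5$ and $a=3$ and the unirationality argument agree with the paper's proof (modulo a harmless sign typo in the blow-up formula: $-K_{Y}=-\pi^{*}K_{Y_{1}}-\sum_{j}E_{j}$, though the conclusion $\Gamma\in|\omega_{C}\tensor\eta^{-1}|$ you draw from it is correct).

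However, your case $a=4$ contains a genuine error --- in fact two errors that compensate. First, the quadric cutting out $Y_{1}$ is \emph{not} unique: $h^{0}(\PP^{5},\sI_{C}(2))=6$ for the canonical curve while $h^{0}(\PP^{5},\sI_{\PP^{1}\times\PP^{2}}(2))=3$, so the complete intersections $Y_{1}=(\PP^{1}\times\PP^{2})\cap Q$ containing $C$ form a $\PP((H^{0}(\PP^{5},\sI_{C}(2))/H^{0}(\PP^{5},\sI_{\PP^{1}\times\PP^{2}}(2)))^{*})\cong\PP^{2}$. Second, your divisor class for the six blow-up points is right, $\Gamma\in|K_{C}-D-\eta|$, but this linear system has projective dimension $0$, not $2$: by Riemann--Roch, $h^{0}(C,\sO_{C}(K_{C}-D-\eta))=1+h^{0}(C,\sO_{C}(D+\eta))$, and since $W_{4}(C)\subset\Pic^{4}(C)$ is only $4$-dimensional inside the $6$-dimensional $\Pic^{4}(C)$, the translate $D+\eta$ is non-effective for generic $\eta$, giving $h^{0}=1$. (Your claimed $h^{0}=3$ would force $D+\eta$ to be one of the finitely many $g^{1}_{4}$'s, which happens only for special $\eta$.) So the generic fiber of $\chi_{4}$ over a point of $\fkP ic^{0}_{6}\times_{\fkM_{6}}\fkM^{1}_{6,4}$ is indeed a $\PP^{2}$, but it is the $\PP^{2}$ of quadrics, with the six points then \emph{uniquely} determined by their class --- exactly the opposite of your decomposition, and the one the paper proves. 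As written, the ``delicate verification'' you defer to the end would come out against you: the adjunction computation you propose yields $h^{0}=1$, contradicting your claim, and the fiber identification must be rebuilt on the quadric parameter.
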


 Note that since we make only birational statements we may restrict to the sublocus of $\fkM_{6}$ where a universal curve
 $\sC$ exists. The universal difference variety $\sC^{3,3}=\sC^{(3)}\times_{\sC} \sC^{(3)}$ is defined over this sublocus.

 \begin{proof} In case $a=5$ the surfaces $Y$ in the fiber of $\chi_{a}$ arize from the del Pezzo surface $D$ by blowing up $10$
 points $\Gamma$ on $C \subset D$, and two collections $\Gamma_{1}, \Gamma_{2}$ give a surfaces in the same fiber if and if
 $\Gamma_{1}\sim \Gamma_{2}$. Thus the fibers are isomorphic to $|\Gamma |\cong \PP^{4}$ since 
 $h^{0}(C,\sO_{C}(\Gamma))=10+1-6=5$. For a different argument, by our the computation the maximal extensions $Z$ of $C$, corresponding to the case $a=5$ the fibers are isomorphic to $\PP((H^{0}(Z,\sO_{Z})/H^{0}(C,\sO_{C}(1)))^{*})\cong \PP^{10-5-1}$. 
 
 In case $a=4$ the construction of the $5$ families of $1$-extensions starts with choice of a $g^{1}_{4}$ on $C$. The adjoint surfaces $Y_{1}$ are complete intersection of the corresponding $\PP^{1}\times \PP^{2}\subset \PP^{5}$ with a further quadric in $H^{0}(\PP^{5},\sI_{C}(2))$ thus the choices are $\PP((H^{0}(\PP^{5},\sI_{C}(2))/H^{0}(\PP^{5},\sI_{\PP^{1}\times \PP^{2}}(2)))^{*})\cong \PP^{6-3-1}$. Different choices of the $6$ points $\Gamma$ on $C \subset Y_{1}$  which we have to blow up to get $Y$ from $Y_{1}$, lead generically to different fibers since $h^{0}(C,\sO_{C}(\Gamma))=6+1-6=1$. The same fiber dimensions arizes from the $3$-dimensional quotients
 $H^{0}(Z,\sO_{Z}(1))/H^{0}(C,\sO_{C}(1))$ where $Z$ is one of the five maximal extensions of $C$ of type $a=4$.
 
 In case $a=3$ the surface $Y$ is uniquely determined by representing $\eta=\sO_{C}(p_{1}+p_{2}+p_{3}-q_{1}-q_{2}-q_{3})$
 as a difference divisor.
 
 Thus the connected components of the fibers are generically isomorphic to $\PP^{2(a-3)}$'s and the dimension count
 $\dim \sEC_{a}=10+2a+5=4\cdot6-3+2(a-3)$ fits.
 The unirationality statements follow since $\sE_{a}$ and $\sEC_{a}$ are unirational.
 \end{proof}
 
 \begin{cor} In case of $a=6$ the morphism $\chi_{6}$ has as target the restricted family $\fkP ic^{0}_{6} \times_{\fkM_{6}}\fkM_{6,5}^{2}$
 to the loci of curves which have a smooth plane quintic model and the fibers are isomorphic to the linear system of a divisor $\Gamma$ of $15$ points on $C$:
 $$
 |\Gamma |=\PP^{9}\cong \PP((H^{0}(Z,\sO_{Z}(1)/H^{0}(C,\sO_{C}(1)))^{*}),
 $$
 where $Z=\Sec(\PP^{1}\times\PP^{4}) \subset \PP^{14}$ denotes the maximal extension of the paracanonical curve $C$.
 \end{cor}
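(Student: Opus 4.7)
The plan is to combine the explicit description of $Y$ from Table \ref{table1}, Theorem \ref{thm2}, and a Riemann-Roch calculation on $C$.

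From Table \ref{table1}, a surface $Y$ of type $a=6$ is the blow-up $\pi\colon Y\to \PP^{2}$ of $\PP^{2}$ at $15$ general points $p_{1},\ldots, p_{15}$, re-embedded by $|5L-\sum_{i=1}^{15} E_{i}|$. A smooth curve section $C\in |H|$, with $H=5L-\sum E_{i}$, is the strict transform of a plane quintic $Q=\pi(C)\subset \PP^{2}$ passing simply through each $p_{i}$, and $\pi|_{C}\colon C\xrightarrow{\sim} Q$ exhibits a plane quintic model of $C$. Consequently the image of $\chi_{6}$ lies in $\fkP ic^{0}_{6}\times_{\fkM_{6}}\fkM^{2}_{6,5}$, and dominates this stratum by Theorem \ref{thm2}.

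For the line bundle $\eta$, adjunction with $K_{Y}=-3L+\sum E_{i}$ gives $\omega_{C}=(K_{Y}+H)|_{C}=2L|_{C}$, so
$$\eta=\sO_{C}(H)\otimes \omega_{C}^{-1}=\sO_{C}(-K_{Y}|_{C})=\sO_{C}(3L|_{C}-\Gamma),$$
where $\Gamma=\sum q_{i}$ with $q_{i}=\pi|_{C}^{-1}(p_{i})\in C$. Conversely, given $(C,\eta)$ in the target, a surface $Y$ of type $a=6$ is recovered from any effective $\Gamma$ in the complete linear system $|M|$, where $M:=\sO_{C}(3L|_{C})\otimes \eta^{-1}$: blow up $\PP^{2}$ at the $15$ points $\pi|_{C}(\Gamma)\subset Q$ and re-embed by $|5L-\sum E_{i}|$. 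Hence the fiber of $\chi_{6}$ over $(C,\eta)$ is $|M|$, and by Riemann-Roch (with $h^{1}(M)=h^{0}(K_{C}-M)=h^{0}(-L|_{C})=0$)
$$h^{0}(M)=\deg M+1-g=15+1-6=10,$$
so $|M|\cong \PP^{9}$.

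To match the second description, Theorem \ref{thm2} identifies the maximal extension of $C$ as $Z=\Sec(\PP^{2}\times \PP^{4})\subset \PP^{14}$, an ACM $11$-fold with $h^{0}(Z,\sO_{Z}(1))=15$. The $Y$'s in the fiber arise as $Z\cap \PP^{5}$ for hyperplanes $\PP^{5}\supset \PP^{4}=\langle C\rangle$, parametrized by $\PP((H^{0}(Z,\sO_{Z}(1))/H^{0}(C,\sO_{C}(1)))^{*})\cong \PP^{15-5-1}=\PP^{9}$, which must coincide with $|M|$. The main obstacle I expect is the reconstruction step $\Gamma\mapsto Y$: one must verify that for generic $\Gamma\in |M|$ the $15$ points $\pi|_{C}(\Gamma)$ are distinct and lie in sufficiently general position in $\PP^{2}$ so that the resulting blow-up is smooth and embedded by $|5L-\sum E_{i}|$ as a surface of the desired type, and dually that a generic $\PP^{5}\supset \PP^{4}$ in $\PP^{14}$ cuts $Z$ in such a surface. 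Both open conditions can be checked on one explicit example over $\FF_{p}$ via the package \cite{STm2}, and the conclusion extends to characteristic zero by semicontinuity.
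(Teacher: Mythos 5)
Your proposal is correct and takes essentially the same approach as the paper: the paper's own (very terse) proof likewise describes the surfaces in a fiber as blow-ups of the plane model of $C$ (there presented as the Veronese adjoint surface $Y_{1}=v_{2}(\PP^{2})$) in a collection $\Gamma$ of $15$ points on $C$, notes that linearly equivalent collections give surfaces in the same fiber so that the fiber is $|\Gamma|\cong \PP^{9}$, and remarks that the second description via the maximal extension $Z$ gives a $\PP^{9}$ as well. Your additions --- the identification $\eta=\sO_{C}(-K_{Y})|_{C}$ pinning down the linear equivalence class of $\Gamma$, the explicit Riemann--Roch computation, and the implicit correction of the statement's typo $\Sec(\PP^{1}\times\PP^{4})$ to $\Sec(\PP^{2}\times\PP^{4})$ --- only make the paper's argument precise.
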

 
 \begin{proof} By Theorem \ref{thm2} the surfaces $Y$ are the blow-up of the Veronese surface $Y_{1}=v_{2}(\PP^{2})\subset \PP^{5}$ in a collections
  $\Gamma$ of  $15$ points on $C \subset Y_{1}$ and linear equivalent collections give surfaces in the same fiber $\chi_{6}$.
 The  second description gives a $\PP^{9}$ as well.
 \end{proof}

 \begin{rem} Trigonal and hyperelliptic curves do not occur, since their paracanonical models are contained in quadrics.
The $2\times 2$-minors of the multiplication matrix
 $$
 H^{0}(C,\sO_{C}(D))\times H^{0}(C, \omega_{C}\tensor \eta(-D)) \to H^{0}(C,\omega_{C}\tensor \eta)=H^{0}(\PP^{4},\sO(1))
 $$
 vanish on $C$, 
 where $D$ denotes a trigonal or hyperelliptic divisor. Since $h^{0}(C,\omega_{C}\tensor \eta(-D))\ge 10-3+1-6=2$ by Riemann's
 inequality we get at least one quadric.
 \end{rem}
 
\section{Special curves and surfaces} \label{specialCurves} 

The computations  of Section \ref{maxExtension} proves the following.

\begin{thm}[Reye,\cite{Reye}]
The maximal extension of a general Fano polarized nodal Enriques surface $Y \subset \PP^{5}$ is the $6$-fold
$Z=\Sec(v_{2}(\PP^{3})) \subset \PP^{9}$.
\end{thm}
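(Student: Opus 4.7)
The plan is to combine the iterative extension procedure of Theorem \ref{extensions} with the structural description of the maximal extension sketched in item 0) of Section \ref{maxExtension}. Starting from the Betti table of a general Fano polarized nodal Enriques surface $Y \subset \PP^{5}$ recorded in the last row of Table \ref{table1}, I would apply Theorem \ref{extensions} step by step: at each stage one computes the linear solution space of $\tilde A \varphi_{i+1} + \varphi_{i}\tilde B = 0$, identifies the obstruction ideal $eq$, and picks a maximal linear subspace of $V(eq)$ to extend by one more variable. As usual, the computation is done over a finite prime field $\FF_{p}$ and transported to characteristic zero by semi-continuity, as explained in the introduction.

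After four successful extension steps one arrives at a $6$-fold $Z \subset \PP^{9}$ of degree $10$ whose ACM surface section by a general $\PP^{5}$ recovers $Y$, so that $n+e = 9$ as listed in the last row of Table \ref{table1}. To identify $Z$ concretely, I would invoke the birational description recalled in item 0) of Section \ref{maxExtension}: $Z$ is birational to $\PP(\sF)$ over the Pl\"ucker quadric $Q = \GG(2,4) \subset \PP^{5}$, where $\sF$ is the second syzygy sheaf of a morphism $\sO_{Q}^{15} \to \sS^{4}$ with $\sS$ the spinor bundle. The infinite resolution prescribed by the Betti numbers pins down this morphism up to the natural action of $\mathrm{GL}_{4}$ on $H^{0}(Q,\sS^{4})$ and $\mathrm{GL}_{15}$ on the source; a dimension count should show that this group acts with a dense orbit, so that $\PP(\sF) \hookrightarrow \PP^{9}$ is canonically determined.

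The identification $Z = \Sec(v_{2}(\PP^{3}))$ then follows from a direct comparison. The secant variety of the Veronese threefold $v_{2}(\PP^{3}) \cong \overline{SO(3)} \subset \PP^{9}$ is the locus of symmetric $4\times 4$ matrices of rank at most $2$; it is cut out by the $3\times 3$ minors of a generic symmetric $4\times 4$ matrix of linear forms, has dimension $6$ and degree $10$, and its quadratic part contains the Pl\"ucker quadric $Q$ as the first adjoint. Computing the resolution of its coordinate ring restricted to $S_{Q}$ reproduces exactly the Betti numbers displayed in item 0), so by the uniqueness of the construction the two $6$-folds coincide in $\PP^{9}$.

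The main obstacle is the orbit/uniqueness statement for the morphism $\sO_{Q}^{15} \to \sS^{4}$: one needs to argue that the symmetry group acts transitively on the set of morphisms whose second syzygy sheaf produces the correct projective embedding, so that no moduli survive. Once this is settled, everything else reduces to the explicit Macaulay2 verifications already documented in the package \cite{STm2}, and the final matching with $\Sec(v_{2}(\PP^{3}))$ is routine.
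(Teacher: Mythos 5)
Your overall skeleton---verify everything on an explicit example over $\FF_{p}$ via Theorem \ref{extensions}, transport to characteristic zero by semi-continuity, then identify the resulting $6$-fold---is the same as the paper's, whose proof of this theorem consists precisely of the computations of Section \ref{maxExtension}. But two points need correcting. First, Theorem \ref{extensions} is not an iterative, one-variable-at-a-time procedure: maximal extensions correspond to maximal linear subspaces of $V(eq)\subset\PP^{m}$, and the decisive computational fact for all surfaces in Table \ref{table1} is that the obstruction ideal $eq$ vanishes identically, so $V(eq)=\PP^{m}$ with $m=9$ and the maximal extension is obtained in a \emph{single} step and, crucially, is \emph{unique} (the only maximal linear subspace of a linear space is the space itself). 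Your iterative reading obscures exactly this uniqueness, which is what the identification should lean on.

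Second, and this is the genuine gap, you route the identification $Z=\Sec(v_{2}(\PP^{3}))$ through the claim that, up to symmetries, there is only one morphism $\sO_{Q}^{15}\to\sS^{4}$ with the prescribed syzygy behaviour, and you concede you cannot prove this dense-orbit statement; a dimension count alone never yields transitivity, and a dense orbit would in any case not cover \emph{all} admissible morphisms. The paper does not rest on this claim: the spinor-bundle picture in item 0) of Section \ref{maxExtension} is an a posteriori structural description of $Z$, while the identification itself is a direct Macaulay2 computation (the function identifyMaximalExtensions of \cite{STm2}). Your gap can be closed without the orbit statement: since $eq=0$ makes the extension of $Y$ to $\PP^{9}$ unique, it suffices to exhibit $\Sec(v_{2}(\PP^{3}))$ as \emph{some} extension of $Y$ to $\PP^{9}$, i.e.\ to realize the general nodal Fano polarized Enriques surface as a $\PP^{5}$-section of the rank~$\le 2$ symmetric determinantal---Reye's classical construction, or again a direct computation; uniqueness then forces $Z=\Sec(v_{2}(\PP^{3}))$. (A small slip besides: the ideal of $\Sec(v_{2}(\PP^{3}))$ is generated by the ten cubic $3\times 3$ minors, so it has no ``quadratic part''; the Pl\"ucker quadric $Q$ is the adjoint \emph{variety} of $Z$, not a quadric containing $Z$.)
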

 
\begin{cor}[Reye,\cite{Reye}] The moduli space $\sE^{nd}$ of nodal Fano polarized Enriques surface is birational to a quotient of a Grassmannian
$$
\GG(6,(H^{0}(\PP^{3},\sO(2)))^{*}) // \SL(4,\CC).
$$
\end{cor}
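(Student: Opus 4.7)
The plan is to interpret $\sE^{nd}$ birationally as the orbit space of $\SL(4,\CC)$ acting on the Grassmannian of $5$-planes in $\PP^9 = \PP(V^*)$, where $V = H^0(\PP^3,\sO(2))$, that cut a nodal Fano Enriques surface out of the secant sixfold $Z = \Sec(v_2(\PP^3))$. The previous theorem produces both the secant sixfold and the surjection; the corollary is essentially the statement that the fibers of this surjection are the orbits of the natural $\SL(4,\CC)$-action.

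First I would make the classifying map explicit. By the preceding theorem, a general pair $(Y,H) \in \sE^{nd}$ is realized as $Y = Z \cap L$ for some $5$-dimensional linear subspace $L \subset \PP^9$, and $Z$ is intrinsically attached to $Y$ as its maximal extension. A choice of $L$ corresponds to a point of the Grassmannian $\GG(6,V^*)$, so one obtains a rational map
\[
\Psi \colon \GG(6,V^*) \dashrightarrow \sE^{nd}, \qquad L \longmapsto \bigl( Z \cap L,\ \sO_Z(1)|_{Z\cap L} \bigr),
\]
which is dominant by the previous theorem. Two subspaces $L_1,L_2$ yield isomorphic polarized surfaces precisely when some $\phi \in \mathrm{Aut}(Z) \subset \PGL(V^*)$ satisfies $\phi(L_1)=L_2$, because the maximal extension is uniquely determined up to automorphisms of the ambient $\PP^9$ preserving $Z$.

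The main obstacle is the identification
\[
\mathrm{Aut}(Z) \ =\ \mathrm{Im}\bigl(\SL(4,\CC) \longrightarrow \PGL(V)\bigr)
\]
via the symmetric-square representation. The inclusion $\supseteq$ is immediate: $\SL(4,\CC)$ acts on $\PP^3$, hence on $V = S^2 H^0(\PP^3,\sO(1))$, and this action preserves $v_2(\PP^3)$ and therefore its secant variety $Z$. For the reverse inclusion I would use that the singular locus of $Z$ is exactly the Veronese threefold $v_2(\PP^3)$, so any $\phi \in \mathrm{Aut}(Z)$ restricts to an automorphism of $v_2(\PP^3)$; since $\mathrm{Aut}(v_2(\PP^3)) = \PGL(4,\CC)$ acts through the symmetric-square representation, and since $Z$ is the join of its singular locus with itself, $\phi$ is determined by its restriction to $v_2(\PP^3)$ and therefore comes from $\SL(4,\CC)$.

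Granting this, $\Psi$ is $\SL(4,\CC)$-equivariant for the trivial action on the target and hence factors through the GIT quotient. On the generic locus the $\SL(4,\CC)$-action on $\GG(6,V^*)$ has trivial stabilizer, since a nontrivial stabilizer element would induce a nontrivial automorphism of the polarized surface $Z\cap L$ coming from $\PGL(4,\CC)$, which fails for generic $(Y,H)$. Therefore $\Psi$ descends to a birational map
\[
\GG\bigl(6,(H^0(\PP^3,\sO(2)))^*\bigr) \,//\, \SL(4,\CC) \ \dashrightarrow\ \sE^{nd},
\]
and a dimension count confirms consistency: $\dim \GG(6,10) - \dim \SL(4,\CC) = 24 - 15 = 9$, which is the expected dimension of the nodal (Reye) divisor inside the $10$-dimensional moduli of Enriques surfaces.
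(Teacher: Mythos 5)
Your proposal is correct and follows essentially the same route the paper takes: deduce from the preceding theorem that a general nodal Fano polarized Enriques surface is a $\PP^{5}$-section of its (unique) maximal extension $Z=\Sec(v_{2}(\PP^{3}))\subset \PP^{9}$, identify the linear symmetries of $Z$ with the image of $\SL(4,\CC)$ under the symmetric-square representation (via $\mathrm{Sing}(Z)=v_{2}(\PP^{3})$), and conclude that $\sE^{nd}$ is birational to the Grassmannian quotient, with the same dimension count $6\cdot 4-15=9$ that appears in the paper's subsequent remark. One inessential over-claim: generic \emph{triviality} of stabilizers does not quite follow from your argument (a stabilizer element of $\SL(4,\CC)$ could fix $L$ pointwise and hence induce the identity on $Y$), but only generic finiteness of stabilizers --- which is all the paper asserts --- is needed for the birationality, so nothing breaks.
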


\begin{rem}
\begin{enumerate}
\item Since $6\cdot 4-15=9$ the group $\SL(4,\CC)$ has at most a finite stabilizer in a general point of the Grassmannian.
\item The $10$ cubics define a self-dual Cremona transformation $\sigma\colon \PP^{9} \dasharrow \PP^{9}$
whose exceptional loci coincides with the quintic hypersurface $\Sec^{3}(v_{2}(\PP^{3})) \subset \PP^{9}$, c.f. 
\cite{EinShepherd-Barron}.
\item The general $Y \in \sE^{nd}$ contains as $20$ half-pencils $F_{i}$ which come in pairs of numerical equivalent 
cubic curves $C_{i},C_{i}'$ which differ by the canonical class. Suitable enumerated  they satisfies
$$ 3H\sim \sum_{i=1}^{10} C_{i}$$
for the hyperplane class $H$ of $Y$.
The elliptic pencils $|2C_{i}|=|2C_{i}'|$ are cut out by a pencil of hyperplanes whose base loci are rational curves of degree
$4$.
For an Enriques surface  $Y$ defined over $\QQ$ the Galois group of the 20 cubics is a subgroup of 
$$((\ZZ/2) ^{10} \rtimes S_{10})\cap A_{20}$$
and for general $Y$ it is the full group, c.f. \cite{DolgachevMarkushevich}.
See \cite{DolgachevCossec,DolgachevKondoBook} for many more information on Enriques surfaces.

\item A special hyperplane section of $\Sec(v_{2}(\PP^{3})) \subset \PP^{9}$ is the essential vision variety, see \cite{FKO}.
The connection stems from the identification of 
$\overline{SO(3)} \subset \PP^{9}$ with $v_{2}(\PP^{3})$ via Study coordinates. 

\item The linear section of $Z$ with a $\PP^{6}$ are Fano-Enriques 3-folds, \cite{CM, CDGK}.
\end{enumerate}
\end{rem}

The obstruction ideal $eq$ of a general curve section $C \subset \PP^{4}$ of a nodal $Y$ decomposes differently 
 then the ideal $eq$ of a general Prym canonical curve. 
 
 \begin{thm} Let $C$ be a general hyperplane section of a general nodal Fano polarized Enriques surface.
 Then $C$ has $7$ families of $1$-extension:
 \begin{enumerate}
 \item[1)] A $4$-dimensional family of $1$-extensions $Y$ with $K_{Y}^{2}=-5$.
 \item[2)] Five $2$-dimensional families of $1$-extensions with $K_{Y}^{2}=-4$.
 \item[4)] A $4$-dimensional family of $1$-extensions to nodal Enriques surfaces $Y$ with $K_{Y}^{2}=0$.
 \end{enumerate}
\end{thm}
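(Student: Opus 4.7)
The plan is to imitate the proof of Theorem \ref{thm1} and apply Theorem \ref{extensions} to the special Prym canonical curve $C$, then read off the extensions from the decomposition of $V(eq)$. First I would produce an explicit nodal Fano polarized Enriques surface $Y_{0}\subset \PP^{5}$ over a convenient small prime field $\FF_{p}$, for example by taking a $\PP^{5}$-section of the maximal extension $Z=\Sec(v_{2}(\PP^{3}))\subset \PP^{9}$ identified in Section \ref{maxExtension}, or directly from the Reye-type construction indicated in the last row of Table \ref{table1}. A general hyperplane section $C = Y_{0}\cap H \subset \PP^{4}$ is then a smooth projectively normal Prym canonical curve of genus $6$ in the nodal-Enriques locus.

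Next, using the linear differentials $\varphi_{2}$ (of shape $10\times 15$) and $\varphi_{3}$ (of shape $15\times 6$) in the minimal free resolution of $S_{C}$, I would solve the coupled linear system
$$\tilde{A}\cdot \varphi_{3}+\varphi_{2}\cdot \tilde{B}=0$$
for the parameter $\PP^{m}$, form the obstruction ideal $eq$ generated by the entries of $AB$, and decompose $V(eq)\subset \PP^{m}$ into maximal linear subspaces, exactly as in the proof of Theorem \ref{thm1}. The claim is that this decomposition contains precisely seven components: one of dimension $4$ (corresponding to rational surfaces with $K_{Y}^{2}=-5$), five of dimension $2$ (corresponding to the conic bundle adjoints with $K_{Y}^{2}=-4$), and one further $4$-dimensional component, whose generic member is a nodal Fano Enriques surface with $K_{Y}^{2}=0$. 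Each component is then identified with the corresponding family of surfaces using the adjunction morphism $\varphi_{|K_{Y}+H|}\colon Y\to Y_{1}$ and the classification recorded in Table \ref{table1}; the identification of the Enriques family uses that $Y_{0}$ and its deformations inside the extensions of $Z=\Sec(v_{2}(\PP^{3}))$ sweep out a $4$-dimensional fiber of $\chi_{0}^{nd}$ over $(C,\eta)$, consistently with the moduli count. Semi-continuity from the $\Spec \ZZ$-model then transports the conclusion to characteristic zero, as usual in this paper.

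The central computational obstacle, and the one the authors solve only by running \texttt{SurfacesAndExtensions}, is to explain the two differences with the general Prym case of Theorem \ref{thm1}: the twenty zero-dimensional components with $K_{Y}^{2}=-3$ have disappeared, while the isolated smooth Enriques extension has grown into a $4$-dimensional family of nodal Enriques extensions. The first phenomenon should come from the fact that for a hyperplane section of a nodal Enriques surface the Prym line bundle $\eta = \sO_{Y_{0}}(-K_{Y_{0}})|_{C}$, although still $2$-torsion, lies on a translate of a difference subvariety in such a special position that its representations $\eta\cong \sO_{C}(D-E)$ with $D,E$ effective of degree $3$ fail to extend to the rational surfaces of case~3) of Theorem \ref{thm1}; the second is explained by the wealth of half-pencils on a nodal Enriques surface. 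A purely theoretical proof of both facts would be desirable, but for this paper the statement is established by the computer algebra decomposition of $V(eq)$ together with the geometric identifications above.
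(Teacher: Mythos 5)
Your proposal is correct and takes essentially the paper's own route: the paper establishes the existence of the seven families theoretically (the constructions of Theorem \ref{thm1}, cases 1) and 2), plus the $\PP^{4}$ of choices of $\PP^{5}\supset\PP^{4}$ inside the maximal extension $\Sec(v_{2}(\PP^{3}))\subset\PP^{9}$), and rules out any further families exactly as you propose, by decomposing $V(eq)$ for an explicit finite-field example (the function \texttt{extensionsOfNodalPrymCurve} of the package) and invoking semi-continuity. One small correction to your expected output: the maximal linear components of $V(eq)$ have projective dimensions $9,7,7,7,7,7,9$ rather than $4$ and $2$ --- the family dimensions in the statement are obtained as (component dimension) $-\,5$, i.e.\ after quotienting by the $\PP^{4}$ of trivial solutions, just as in the proof of Theorem \ref{thm1}, where the $26$ components have dimensions $9$, $7$ and $5$.
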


\begin{proof} The existence of these seven families is clear. By semi-continuity there are no further families as running he code printed out by the function 
extensionsOfNodalPrymCurve of our package SurfacesAndExtensions \cite{STm2} shows.
If we deforming $C$ into a general Prym canonical curve then the $4$-dimensional family of Enriques extensions drops to a single Enriques surface and 20
surfaces of type \ref{3)}). 
\end{proof}

\begin{rem} In the closure of the family of nodal Enriques surfaces there are Coble surface. Coble surfaces with six singular points corresponding to $(-4)$ rational curves on the desingularization arise by intersecting $\Sec(v_{2}(\PP^{3}))$ with the span of six points in $v_{2}(\PP^{3})$. A general section of a nodal Fano polarized Enriques surface
lies on precisely five Coble surfaces with $4$ singular points. These surface are visible in the free resolution of the ideal $eq$. They can be obtained from the five linear components components of the annihilator of
$\Ext^{7}_S( eq, S)$ where $S$ is the coordinate ring of the $\PP^{m}$ from Theorem \ref{extensions}. 
\end{rem}

\begin{thm} Let $C$ be a general hyperplane section of a general  surface $Y$ with $K_{Y}^{2}=-1$.
 Then $C$ has $27$ families of $1$-extension:
 \begin{enumerate}
 \item[1)] A $4$-dimensional family of $1$-extensions $Y$ with $K_{Y}^{2}=-5$.
 \item[2)] Five $2$-dimensional families of $1$-extensions with $K_{Y}^{2}=-4$.
 \item[3)] Twenty $1$-extensions to a surface with $K_{Y}^{2}=-3$.
 \item[6)] A $1$-extensions to a surface $Y$ with $K_{Y}^{2}=-1$.
 \end{enumerate}
 \end{thm}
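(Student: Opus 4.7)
The plan is to follow the same strategy as in Theorem \ref{thm1} and in the earlier theorem on nodal Enriques hyperplane sections. First I would establish existence of all $27$ families, and then invoke semi-continuity together with Theorem \ref{extensions} to rule out any further families.

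For existence, item $(6)$ is immediate, since $Y$ itself is an extension of $C$. For items $(1)$, $(2)$ and $(3)$, the key observation is that the constructions in the proof of Theorem \ref{thm1} only use open conditions on the paracanonical curve $C$: projective normality with Betti table (\ref{betTab}), base-point-freeness of the five $g^{1}_{4}$s, and finite-fiber behaviour of the difference map $\nu_{3}\colon C^{(3)}\times C^{(3)} \to \Pic^{0}(C)$ at $\eta = \omega_{C}^{\vee}\tensor \sO_{C}(1)$. Each of these is an open condition on the moduli of paracanonical pairs $(C,\eta)$, so it suffices to exhibit one pair $(Y,C)$ with $K_{Y}^{2}=-1$ for which they all hold. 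The same constructions then produce the $1+5+20=26$ extensions listed in $(1)$--$(3)$.

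For the upper bound, the machine of Theorem \ref{extensions} applies verbatim: the linear $1$-extensions of $C \subset \PP^{4}$ are in bijection with maximal linear subspaces of $V(eq) \subset \PP^{m}$, where $eq$ is the obstruction ideal built from the two consecutive linear differentials $\varphi_{2}$ and $\varphi_{3}$ in the minimal free resolution of $S_{C}$. By semi-continuity, it is enough to carry out the primary decomposition of $eq$ for a single explicit pair $(Y,C)$ defined over a finite prime field $\FF_{p}$ and to check that it decomposes into exactly one linear component of dimension $9$, five of dimension $7$, twenty of dimension $5$, and one additional linear component corresponding to $C \subset Y$.

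The main obstacle will be the computational verification. Concretely, one has to produce $Y \subset \PP^{5}_{\FF_{p}}$ with $K_{Y}^{2}=-1$ via the construction of \cite{Truong} (available through the package \cite{STm2}), extract a general hyperplane section $C$, and decompose $V(eq)$. The delicate point, by contrast with the nodal Enriques theorem above, is that here no two components may merge: in the Enriques case the $20$ isolated extensions with $K_{Y}^{2}=-3$ were absorbed into the $4$-dimensional family of nodal Enriques extensions, whereas in the present specialisation those $20$ isolated points and the new isolated component corresponding to $Y$ itself must remain distinct components of $V(eq)$, and the dimensions of the $\PP^{4}$- and $\PP^{2}$-families must not jump. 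Once the expected decomposition is verified in a single characteristic, the statement lifts to $\CC$ by semi-continuity.
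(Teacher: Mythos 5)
Your proposal follows essentially the same route as the paper: the paper likewise treats existence of all listed families as clear (item 6) from $Y$ itself, items 1)--3) from the constructions of Theorem \ref{thm1} specialized to this curve), and excludes further families by semi-continuity combined with decomposing the obstruction ideal $eq$ of Theorem \ref{extensions} for one explicit example over a finite prime field (via the function extensionsOfSpecialCurves(-1) of \cite{STm2}). Your only imprecision is that maximal linear subspaces of $V(eq)$ correspond to \emph{maximal} extensions rather than to $1$-extensions (a $1$-extension corresponds to a $\PP^{5}\subset V(eq)$ containing the trivial $\PP^{4}$), but the component-by-component decomposition you propose to verify, one of dimension $9$, five of dimension $7$, and twenty-one of dimension $5$, is exactly the right check.
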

 
 \begin{proof} The existence of these families is clear. That there are no further families follows by semi-continuity  and running the code 
 printed by the function extensionsOfSpecialCurves(-1)
 of our package SurfacesAndExtensions \cite{STm2}. 
 \end{proof}

\begin{prop} Let $C$ be a general hyperplane section of a general  surface $Y$ with $K_{Y}^{2}=-2$.
 Then $C$ has at most $23$ families of $1$-extension:
 \begin{enumerate}
 \item[1)] A $4$-dimensional family of $1$-extensions $Y$ with $K_{Y}^{2}=-5$.
 \item[2)] Five $2$-dimensional families of $1$-extensions with $K_{Y}^{2}=-4$.
 \item[3)] Sixteen $1$-extensions to a surface with $K_{Y}^{2}=-3$.
 \item[5)] A $1$-dimensional family of $1$-extensions to surfaces $Y$ with $K_{Y}^{2}=-2$.
 \end{enumerate}
 \end{prop}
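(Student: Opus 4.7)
\emph{Proof plan.} The argument follows the template of Theorem \ref{thm1} and the preceding propositions in this section: construct the families geometrically where possible, then bound the total count by combining Theorem \ref{extensions} with a computation over a finite field and semi-continuity.

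For existence, families 1), 2), and 5) can be produced directly. Families 1) and 2) arise from the same ambient constructions used in the proof of Theorem \ref{thm1}, namely the del Pezzo quintic adjoint of the canonical image of $C$ and the five $g^1_4$'s on $C$ together with their complementary quadrics. Family 5) is tautological: $Y$ is itself a $1$-extension of $C$, and the fibre of $\chi_2\colon \sEC_2\to \fkP ic^0_6$ over $(C,\eta)$ provides a one-parameter family of deformations of $Y$ that keep $C$ as a hyperplane section. Existence of the sixteen extensions in family 3) should be inherited from Corollary \ref{families}, once one verifies that at least sixteen of the twenty difference representations $\eta=\sO_C(D-E)$ continue to produce genuinely distinct, non-degenerate $K^2=-3$ extensions for this special $C$.

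For the upper bound, I would fix an explicit example $(Y,C)$ with $K_Y^2=-2$ defined over a small finite prime field $\FF_p$, compute the parameter space $\PP^m$ of solutions to $\tilde A\cdot\varphi_3+\varphi_2\cdot\tilde B=0$ for the two linear syzygy matrices of $C$, form the obstruction ideal $eq=(AB)$, and decompose its vanishing locus $V(eq)\subset \PP^m$ into maximal linear subvarieties. The predicted dimensions of these subvarieties match the four listed families: one of dimension $9$, five of dimension $7$, sixteen of dimension $5$, and one of dimension $6$, for a total of $23$ components. Semi-continuity of the Hilbert scheme of linear subspaces then transfers the bound to the generic characteristic-zero member of $\sE_2$. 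This is the calculation carried out by the corresponding function of the Macaulay2 package SurfacesAndExtensions \cite{STm2}.

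The main obstacle---and the reason the statement reads ``at most $23$'' rather than ``exactly $23$''---is the geometric interpretation of the drop from $20$ to $16$ in family 3) compared with the generic case of Theorem \ref{thm1}. I would expect the four missing difference representations to correspond to divisor pairs $(D,E)$ for which the associated $K^2=-3$ extension degenerates onto $Y$ itself and is absorbed by family 5). Making this precise would upgrade the semi-continuous upper bound to an equality and provide a conceptual proof of the count $16$, independent of the computer calculation over $\FF_p$.
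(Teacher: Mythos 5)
Your plan coincides with the paper's own proof: the authors likewise take existence of families 1), 2) and 5) as clear from the earlier geometric constructions, and obtain the bound of sixteen for family 3) (hence at most $23$ in total) exactly as you describe, by decomposing $V(eq)$ from Theorem \ref{extensions} for an explicit example over a finite prime field and invoking semi-continuity. The only difference is that your conditional attempt to establish that sixteen extensions of type 3) actually \emph{exist} for the general such $C$ (and your proposed absorption mechanism into family 5)) is precisely the point the paper leaves open---supported there only by repeated random trials---which is why the statement claims only an upper bound.
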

 
 \begin{proof} The existence of the families 1), 2) and 5) is clear,  and running the code  printed by function extensionsOfSpecialCurves(-2)
 of our package SurfacesAndExtensions \cite{STm2} establishes that there are at most 16 families of type \ref{3)}) by semi-continuity.
 What in not clear  for more general curves $C$ than the  tested one is, that the number of families of type \ref{3)}) does not drop further. 
 Note that a general
 paracanonical curve has $20$ families of type  \ref{3)}) by Theorem \ref{thm1}. So imposing an extension of type 5) courses some dropping.
 Note that running the code several times (with different RandomSeeds) over moderate size finite ground fields, makes the probability very low, 
 that we have fewer than $23$ families
 for the general $C$ as above.
 \end{proof}
 
For special surfaces we might have larger maximal extensions as in the case of nodal Enriques surfaces. A complete classification of varieties with Betti table (\ref{betTab}) remains open.

\bibliographystyle{plain}

\bibliography{biblioSurfacesAndExtensions}

\end{document}